\colorlet{Changes@Color}{blue}
\newtheorem{lemma}{Lemma}
\newtheorem{theorem}{Theorem}
\newtheorem{proposition}{Proposition}
\newtheorem{corollary}{Corollary}
\theoremstyle{definition}
\newtheorem{definition}{Definition}
\newtheorem{remark}{Remark}
\renewcommand{\div}{\operatorname{div}}
\newcommand{\norm}[1]{\left\| #1 \right\|}
\newcommand{\abs}[1]{\lvert#1\rvert}
\newcommand{\forget}[1]{}
\numberwithin{equation}{section}
\def\div{\mathrm{div\,}}
\def\dim{d}
\def\Tn{\mathbb{T}^\dim}
\def\Hsav{H_{\mathrm{av}}^s}
\def\Hsavneg{H_{\mathrm{av}}^{-s}}
\def\Dsav{(-\Delta_{\mathrm{av}})^s}
\def\Dsavneg{(-\Delta_{\mathrm{av}})^{-s}}
\begin{document}

\title{A duality based approach to the minimizing total variation flow in the space $H^{-s}$}
\date{}

\author[1]{Yoshikazu Giga}
\author[2]{Monika Muszkieta}
\author[3]{Piotr Rybka}

\affil[1]{Graduate School of Mathematical Sciences, University of Tokyo 
\protect\\ Komaba 3-8-1, Tokyo 153-8914, Japan
\protect\\ e-mail: labgiga@ms.u-tokyo.ac.jp}
\affil[2]{Faculty of Pure and Applied Mathematics, Wroc{\l}aw University of Science and Technology
\protect\\ Wyb. Wyspia\'nskiego 27, 50-370 Wroc{\l}aw, Poland
\protect\\ e-mail: muszkieta@pwr.edu.pl}
\affil[3]{Institute of Applied Mathematics and Mechanics, Warsaw University
\protect\\ ul. Banacha 2, 02-097 Warsaw, Poland
\protect\\e-mail: rybka@mimuw.edu.pl}

\maketitle

\begin{abstract}
We consider a gradient flow of the total variation in a negative Sobolev space $H^{-s}$ $(0\leq s \leq 1)$ under the periodic boundary condition. If $s=0$, the flow is nothing but the classical total variation flow. If $s=1$, this is the fourth order total variation flow. We consider a convex variational problem which gives an implicit-time discrete scheme for the flow. By a duality based method, we give a simple numerical scheme to calculate this minimizing problem numerically and discuss convergence of a forward-backward splitting scheme. Several numerical experiments are given.
\end{abstract}

{\small{\bf Key words:} total variation, Rudin-Osher-Fatemi model,
fractional Sobolev spaces, negative Sobolev spaces.}\\[-0.3cm]

{\small{\bf AMS subject classifications:} Primary 94A08; Secondary 65M06, 49N90, 35K25.}

\section{Introduction}

During the last two decades, total variation models have became very popular in image processing and analysis. This interest has been initiated by the seminal paper of Rudin, Osher and Fatemi \cite{ROF1992}, where the authors proposed to minimize the functional
\begin{equation}
\label{rof}
\frac{1}{2\tau}\|u-f\|^2_{L^2(\Omega)}+  \int_{\Omega}|D u|
\end{equation}
to solve the problem of image denoising. Here, $\Omega\subset\mathbb{R}^2$ denotes the image domain, the function 
$f: \Omega\rightarrow \mathbb{R}$ represents a given gray-scale image and $\tau>0$ is a parameter.
The first term in (\ref{rof}) is called the fidelity term and it enforces a minimizer with respect to $u$ to be close to a given image $f$ in the sense of $L^2$-norm. 
The last term in (\ref{rof}) is the total variation of $u$ and it plays the role of regularization. 

The total variation model (\ref{rof}) was investigated in detail by Meyer \cite{Meyer2001} in context of the image decomposition problem. In this problem, it is assumed that a given image $f$ is a sum of two components, $u$ and $v$, where $u$ is a cartoon representation of $f$ and $v$ is an oscillatory component, composed of noise and texture. Meyer has observed that in the case when $f$ is a characteristic function, which is small with respect to a certain norm, the model (\ref{rof}) does not provide expected decomposition of $f$, since it treats such a function as oscillations. To overcome this inaccuracy, he proposed to consider a new model, replacing the $L^2$-norm in the fidelity term, by a weaker norm, which is more appropriate to deal with textures or oscillatory patterns (see \cite{Meyer2001} for details).
This change, however, introduced a practical difficulty in numerical computation of a minimizer, due to the impossibility of expressing
the associated Euler-Lagrange equation.
One of the first attempts to overcome this difficulty has been made by Osher, Sol\'e and Vese \cite{OshSolVes2003}. They proposed to approximate Meyer's model by its simplified version, given by
\begin{equation}
\label{osv}
\frac{1}{2\tau}\|u-f\|^2_{H^{-1}(\Omega)}+  \int_{\Omega}|D u|\,.
\end{equation}
Here $H^{-1}(\Omega)$ is the function space dual of $H^1_{0}(\Omega)$. 
Later, this model was studied in context of the decomposition problem by Elliott and Smitheman \cite{EllSmi2007}, \cite{EllSmi2008}. It has been also considered for other image processing task, like inpainting, by Sch\"onlieb \cite{Scho09} and Burger et. al \cite{BurHeSch09}.

The total variation models (\ref{rof}) and (\ref{osv}) and their interesting and successful applications in image processing became also the motivation for many authors to perform  rigorous analysis of properties of solutions to the corresponding total variation flows. In the case of the model (\ref{rof}), the corresponding total variation flow is formally given by the second order partial differential equation 
\begin{equation}
\label{eq_l2}
\frac{\partial u}{\partial t}=\div \left(\frac{\nabla u}{|\nabla u|}\right)\,.
\end{equation}
Whereas the total variation flow corresponding to the model (\ref{osv}) 
is provided by the fourth order partial differential equation
\begin{equation}
\label{eq_h1neg}
\frac{\partial u}{\partial t}=(-\Delta)\left[\div \left(\frac{\nabla u}{|\nabla u|}\right)\right]\,.
\end{equation}

We intend to give a few references related to (\ref{eq_l2}) and (\ref{eq_h1neg}) which is not at all exhaustive. Although the meaning of solutions for (\ref{eq_l2}), (\ref{eq_h1neg}) is unclear, it can be understood in a~naive way as the gradient flow of a lower semi-continuous convex functional in a Hilbert space. The well-posedness of (\ref{eq_l2}), (\ref{eq_h1neg}) has been established by the theory of maximal monotone operators initiated by Komura \cite{Ko} and developed by Brezis \cite{Ber73}. A behavior of solution for (\ref{eq_l2}) is rigorously studied, for example, in \cite{AnBaCaMa2001}
\cite{BeCaNo2002}. The book \cite{AnBaCaMa2001} contains several types of well-posedness results in different function spaces other than a Hilbert space $L^2$. Its anisotropic version was studied by  Mucha, Muszkieta and Rybka \cite{MucMusRyb15},  and \L{}asica, Moll and Mucha \cite{LMM} with intention to propose a numerical scheme for image denoising. Meanwhile, a viscosity approach for (\ref{eq_l2}) and its non-divergent generalization is established by M.-H. Giga, Y. Giga and N. Pozar \cite{GGP}. There is less literature on (\ref{eq_h1neg}) based on rigorous analysis. A characterization of the speed is given in Kashima \cite{K2004} (see \cite{Kas12} for a general dimension) and the extinction time estimate is given \cite{GigKoh11}, \cite{GigKurMat14}. It is also noted that a solution to (\ref{eq_h1neg}) may become discontinuous even if it is initially Lipschitz continuous \cite{GigGig10}, which is different from (\ref{eq_l2}). This type of equation is very important to model relaxation phenomena in materials science 
below the roughening temperature; see a recent paper by J.-G. Liu, J. Lu, D. Margetis and J. L. Marzuola \cite{LLMM} and papers therein. A numerical analysis for (\ref{eq_h1neg}) is given by Kohn and Versieux \cite{KV}.

In this paper, we generalize equations (\ref{eq_l2}) and (\ref{eq_h1neg}) and consider the total variation flow in the space $H^{-s}$ with $s\in [0,1]$. 
More precisely, we consider the $(2s+2)$-order parabolic differential equation
\begin{equation}
\label{eq_hsneg}
\frac{\partial u}{\partial t}=(-\Delta)^s \left[\div \left(\frac{\nabla u}{|\nabla u|}\right)\right]
\end{equation}
with periodic boundary conditions and initial data in $H^{-s}$. Our aim is to present a consistent approach to the construction of the minimizing total variation sequence in the space $H^{-s}$ and to discuss the problem of its convergence in an infinite dimensional space setting. Application of derived scheme will allow us to perform numerical experiments to observe evolution of solutions to the equation (\ref{eq_hsneg}) and their characteristic features with respect to different values of the index $s$. Such a close look at this problem may be the basis for further study on the considered evolution equation and its applications.
For example, our numerical experiment suggests that the solution may be discontinuous instantaneously for $s=1/2$ and $s=1$ for Lipschitz initial data. The case $s=1$ has been rigorously proved in [14]. We conjecture that such phenomena occur for all $s \in (0,1]$ excluding $s=0$ of course.

The well-posedness problem for (1.5) is similar to (1.3) and (1.4). Apparently, the characterization of speed of (1.5) was not derived before so we give its characterization. The proof given here is very close to the one for (1.3) given in \cite{AnBaCaMa2001}. We consider a semi-discretization of this equation with respect to a time variable. The scheme we consider here is an implicit-time discrete scheme. Such formula leads to recursive minimization of the functional functional
$$
\frac{1}{2\tau}\|u-f\|^2_{H^{-s}(\Omega)}+  \int_{\Omega}|D u|\,.
$$  
The non-differentiability of the total variation term 
caused a problem although the problem is convex. One way is to use Bergman's splitting method presented by Oberman, Osher, Takei and Tsai \cite{OOTT}. Here, we consider a dual formulation of this problem which is new at least for $s \in (0,1]$. We generate the minimizing dual sequence by the well-known splitting scheme and prove its convergence in $H^{-s}$. It seems that the proof of this setting seems to be new. We use the 
characterization of the subdifferential of the total variation in $H^{-s}$ to derive an explicit form of this sequence. It turns out that its form is very simple and convenient to compute an approximate minimizer. However, the proof of its convergence in $L^2$ can be carried out only in a finite dimensional space. Due to this limitation, we investigate the problem of convergence in ergodic sense.

    This paper is organized as follows. We recall $H^{-s}$ space and the total variation in Section 2. We give a characterization of the subdifferential of the total variation in $H^{-s}$ in Section 3. In Section 4, we give a way of semi-discritization, which is a recursive minimization problem of convex but non-differentiable functional.   
In Section 5, we formulate its dual problem. In Section 6, we discuss convergence of a forward-backward splitting scheme. In Section 7, we derive its explicit form. In Section 8, we discuss its ergodic convergence. In Section 9, we explain how to discretize the introduced scheme and provide an exact condition for a convergent problem in a finite dimensional space. In Section~10,  we present results of numerical experiments to illustrate evolution of solutions to considered total variation flows, showing their characteristic features with respect to different values of  index $s\in[0,1]$.

\section{Preliminaries}
To give a rigours interpretation of the equation (\ref{eq_hsneg}) with the periodic boundary conditions we first need to introduce preliminary definitions and notations. 

Let $\Tn$ be  a $\dim$-dimensional torus defined by
$$\Tn:=\prod_{i=1}^\dim \mathbb{R}/
\mathbb{Z}$$
For $s\in(0,1]$, we define by $H_{\text{av}}^{-s}(\Tn)$, the~space dual of
$$\Hsav(\Tn):=\left\{ u\in H^{s}(\Tn) \ : \ \int_{\Tn} u\,dx = 0 \right\}\,,$$
where $H^s(\Tn)$ is the standard fractional Sobolev space
$$H^s(\Tn)
:=\left\{u\in L^2(\Tn) \ : \  (1+|\xi|^{2s})^{1/2}\hat{u}(\xi)  \in L^2(\Tn) \right\}
$$
equipped with the norm
$$\norm{u}_{H^s(\Tn)}
:= \left(\int_{\Tn} (1+|\xi|^{2s})|\hat{u}(\xi)|^2\,d\xi\right)^{1/2}
$$
Here $\hat{u}$ denotes the Fourier transform of the function $u$, i.e.,
$$\hat{u}(\xi) = \int_{\Tn} u(x) e^{-2\pi i x \xi}\,dx\,.$$
We define the operator $\Dsav:\Hsav(\Tn)\rightarrow \Hsavneg(\Tn)$ by
\begin{equation}
\widehat{\Dsav u}(\xi):=|\xi|^{2s}\widehat{u}(\xi)\,,
\end{equation}
Then the inner product in $\Hsav$ is defined by
$$(u,v)_{\Hsav}
:=\int_{\Tn} |\xi|^{2s}\, \hat{u}(\xi)\,\hat{v}(\xi)\,d\xi = \langle \widehat{\Dsav u}, \hat{v}\rangle = 
\langle \Dsav u, v\rangle$$
for all $u,v \in \Hsav(\Tn)$. The last equality in the above definition follows Parseval's theorem. 

Since the operator $\Dsav$ is an isometry, therefore the inner product in $\Hsavneg(\Tn)$ is given by
$$(f,g)_{\Hsavneg}:=\langle \widehat{\Dsavneg f}, \hat{g}\rangle = \langle \Dsavneg f, g\rangle$$
for all $f$, $g\in H_{\text{av}}^{-s}(\Tn)$. 

Now, we introduce the definition of a total variation of the function $u$. It is given by
$$\int_{\Tn} \abs{D u}
:= \sup\left\{\int_{\Tn} u\, \div z \,dx \ : \ z\in C_0^1(\Tn; \mathbb{R}^d),\  \norm{z}_{\infty}\leq 1 \right\}\,,$$
where for a vector field $z(x)=(z_1(x),z_2(x))$, the norm $\norm{\cdot}_{\infty}$ is defined by $\norm{z}_{\infty}:=\sup_x |z(x)|$ and $|\cdot|$ denotes the standard Euclidean norm. The subspace of periodic functions $u\in L^1(\Tn)$ such that the total variation of $u$ is finite is denoted by $BV(\Tn)$.

\section{A characterization of the subdifferential}
We define the functional $\Phi$ on $L^2(\Tn)$ by
\begin{equation}
\label{defPhi}
 \Phi(u):=\left\{\begin{array}{ll} 
 \int_{\Tn} \abs{D u} & \text{ if } u\in BV(\Tn)\cap \Hsavneg(\Tn)\,,\\[0.1cm]
+\infty & \text{ otherwise}\,.
 \end{array}\right.
\end{equation}
We also need to define the associated dual functional
\begin{equation}
\label{defPhitilde}
\tilde{\Phi}(v) := \sup_w \left\{\frac{(w,v)_{\Hsavneg}}{ \Phi(w)} \ : \ w\in \Hsavneg(\Tn)\setminus\{0\}\right\}\,.
\end{equation}

Further, we need to introduce some standard results.
We assume that $H$ is a normed space and $H^\ast$ is its dual space.
\begin{lemma}
\label{lem1a}
Let $\Phi$, $\Psi: H \rightarrow [0,\infty]$.
If $\Phi\leq\Psi$ then $\tilde{\Psi}\leq \tilde{\Phi}$.
\end{lemma}
\begin{proof}
See proof of \cite[Lemma 1.5]{AndMazCas04}
\end{proof}

\begin{lemma}
\label{lem1b}
Suppose $\Phi$ is convex, lower semi-continuous and positively homogeneous of degree one, then
$\tilde{\tilde{\Phi}}(u) =\Phi(u)$.
\end{lemma}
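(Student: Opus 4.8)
I want to show $\tilde{\tilde\Phi}(u) = \Phi(u)$ for $\Phi$ convex, lsc, positively 1-homogeneous. The definition $\tilde\Phi(v) = \sup_w \{(w,v)_{H^{-s}}/\Phi(w)\}$ is (up to the reciprocal/indicator packaging) the gauge associated with the polar of the sublevel set $\{\Phi \le 1\}$. The cleanest route is to pass through the closed convex set $K := \{v : \tilde\Phi(v) \le 1\} = \{v : (w,v) \le \Phi(w) \text{ for all } w\}$, which is exactly the polar of the closed convex set $C := \{w : \Phi(w) \le 1\}$ (using that $\Phi$ is 1-homogeneous, so $(w,v) \le \Phi(w)$ for all $w$ iff it holds on the "unit sphere" of $\Phi$). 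Then $\tilde\Phi = $ the Minkowski gauge $j_K$, and applying the same operation again, $\tilde{\tilde\Phi} = j_{K^{\circ}}$ where $K^\circ$ is the polar of $K$. By the bipolar theorem, $K^{\circ} = C^{\circ\circ} = \bar C = C$ since $C$ is already closed and convex and contains $0$. Hence $\tilde{\tilde\Phi} = j_C = \Phi$, the last equality because a convex, lsc, positively 1-homogeneous functional with values in $[0,\infty]$ is the gauge of its own unit sublevel set.

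**Steps in order.** First I would record the elementary fact that for $\Phi$ positively 1-homogeneous with values in $[0,\infty]$, one has $\tilde\Phi(v) = \sup\{(w,v)_{H^{-s}} : \Phi(w) \le 1\}$, i.e. $\tilde\Phi$ is the support function of $C = \{\Phi \le 1\}$ (handle the $\Phi(w)=0$, $w\ne 0$ case: if such $w$ exists with $(w,v)>0$ then $\tilde\Phi(v)=+\infty$, consistent with the support-function value since $tw \in C$ for all $t>0$). Second, identify $C$ as a closed (by lower semicontinuity), convex (by convexity of $\Phi$) subset of $H^{-s}$ containing $0$. Third, invoke that the support function of a set equals the support function of its closed convex hull, and that $\sigma_C = \sigma_{C}$ is itself sublinear and lsc, so applying the tilde operation once more gives $\tilde{\tilde\Phi} = \sigma_{\{\sigma_C \le 1\}} = \sigma_{C^{\circ}}$... — more directly: $\tilde{\tilde\Phi}$ is the support function of $\{v : \sigma_C(v) \le 1\} = C^\circ$ (the polar), and $\sigma_{C^\circ} = j_{C^{\circ\circ}}$; by the bipolar theorem $C^{\circ\circ} = \overline{\mathrm{conv}}(C \cup \{0\}) = C$. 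Fourth, conclude $\tilde{\tilde\Phi} = j_C$, and verify $j_C = \Phi$: "$\ge$" because $\Phi(u) \le 1$ forces $u \in C$ so $j_C(u) \le 1 = $ well, really one argues $\Phi(u/\Phi(u)) = 1$ by homogeneity so $u/\Phi(u) \in C$, giving $j_C(u) \le \Phi(u)$; "$\le$" because $u/j_C(u) \in \bar C = C$ (closedness!) gives $\Phi(u) \le j_C(u)$.

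**Main obstacle and how to handle it.** The delicate point is that $H^{-s}_{\mathrm{av}}$ is infinite-dimensional and these functionals take the value $+\infty$, so I must be careful about which separation/bipolar theorem applies and about the degenerate cases (the recession directions where $\Phi$ vanishes, and directions where $\Phi = +\infty$). I expect the genuinely load-bearing ingredient to be the bipolar theorem $C^{\circ\circ} = \overline{\mathrm{conv}}(C\cup\{0\})$ in the Hilbert space $H^{-s}_{\mathrm{av}}$ (equivalently, Hahn–Banach separation of a point from a closed convex set), combined with lower semicontinuity of $\Phi$ to ensure $C$ is already closed. A cleaner packaging, which I would likely adopt to keep the argument short, is to cite the classical fact (e.g. from Andreu–Mazón–Caselles–type references, or standard convex analysis: Moreau, or Demengel–Temam) that for convex lsc positively 1-homogeneous $\Phi$, the "$\tilde{\ }$" operation is an involution — this is essentially the statement that $\Phi$ equals its own biconjugate composed with the identification of sublinear functions with support functions. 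Everything else is bookkeeping; the only place where a reader could object is if $C = \{\Phi \le 1\}$ failed to be closed or failed to contain a neighborhood structure making the polar well-behaved, and lower semicontinuity is exactly what rescues this.
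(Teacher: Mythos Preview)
Your argument is correct: the identification of $\tilde\Phi$ with the support function $\sigma_C$ of $C=\{\Phi\le 1\}$, the use of the bipolar theorem $C^{\circ\circ}=C$ (valid because $C$ is closed, convex, and contains $0$ by lower semicontinuity, convexity, and homogeneity of $\Phi$), and the recovery of $\Phi$ as the Minkowski gauge $j_C$ are all sound, and you handle the degenerate cases $\Phi(w)=0$ and $\Phi(w)=+\infty$ properly.

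As for the comparison: the paper does not actually give a proof of this lemma at all --- it simply writes ``See proof of \cite[Proposition~1.6]{AndMazCas04}.'' So your sketch is strictly more informative than what appears in the paper. In fact, the route you outline (support function of the unit sublevel set, polar duality, bipolar theorem) is essentially the standard argument and is what one finds in the cited Andreu--Caselles--Maz\'on reference; you even anticipate this by naming that book as the natural source. There is no genuine difference in approach, only in the level of detail supplied.
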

\begin{proof}
See proof of \cite[Proposition 1.6]{AndMazCas04}
\end{proof}

\begin{definition}
\label{def:subdifferential}
Let $\Phi$ be a proper lower-semicontinuous, convex functional on $H$, endowed with the inner product  $( \cdot,\cdot)_H$. The subdifferential of $\Phi$ at $u\in H$ is the set
$$\partial_H \Phi(u) := \{v\in H^\ast \ : \ \Phi(w) \geq  \Phi(u) + ( v ,w-u)_H \quad 
\forall w\in H\}.$$
\end{definition}

\begin{lemma}
\label{lem1}
Suppose $\Phi$ is convex, lower semicontinuous, nonnegative, and positively homogeneous of degree one. Then $v\in \partial_{\Hsavneg} \Phi(u)$ if and only if $\tilde{\Phi}_{\Hsavneg}(v)\leq 1$ and $( u,v)_{\Hsavneg} =\Phi(u)$.
\end{lemma}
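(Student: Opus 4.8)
The statement is a standard characterization of the subdifferential of a positively one-homogeneous convex functional in terms of its polar (dual) functional $\tilde\Phi$. The plan is to prove both implications directly from Definition \ref{def:subdifferential}, exploiting one-homogeneity of $\Phi$.

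First I would establish the easy direction. Suppose $v\in\partial_{\Hsavneg}\Phi(u)$, i.e.\ $\Phi(w)\ge\Phi(u)+(v,w-u)_{\Hsavneg}$ for all $w\in\Hsavneg(\Tn)$. Plugging $w=2u$ and $w=0$ (recall $\Phi(0)=0$ and $\Phi(2u)=2\Phi(u)$ by homogeneity) gives the two inequalities $2\Phi(u)\ge\Phi(u)+(v,u)_{\Hsavneg}$ and $0\ge\Phi(u)-(v,u)_{\Hsavneg}$, which together force $(v,u)_{\Hsavneg}=\Phi(u)$. Substituting this back into the subdifferential inequality yields $\Phi(w)\ge(v,w)_{\Hsavneg}$ for every $w$; dividing by $\Phi(w)$ for $w\neq 0$ with $\Phi(w)<\infty$ (the case $\Phi(w)=\infty$ being trivial) and taking the supremum gives exactly $\tilde\Phi_{\Hsavneg}(v)\le 1$ by the definition \eqref{defPhitilde}.

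For the converse, assume $\tilde\Phi_{\Hsavneg}(v)\le1$ and $(u,v)_{\Hsavneg}=\Phi(u)$. From $\tilde\Phi_{\Hsavneg}(v)\le1$ and \eqref{defPhitilde} we get $(w,v)_{\Hsavneg}\le\Phi(w)$ for all $w\in\Hsavneg(\Tn)$ (again the inequality is automatic when $\Phi(w)=+\infty$, and at $w=0$ both sides vanish). Hence for every $w$,
\begin{equation*}
\Phi(u)+(v,w-u)_{\Hsavneg}=(u,v)_{\Hsavneg}+(v,w)_{\Hsavneg}-(v,u)_{\Hsavneg}=(v,w)_{\Hsavneg}\le\Phi(w),
\end{equation*}
which is precisely the condition $v\in\partial_{\Hsavneg}\Phi(u)$.

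I do not expect a serious obstacle here; the argument is self-contained given Definition \ref{def:subdifferential} and \eqref{defPhitilde}. The only points requiring a little care are the bookkeeping of the extended-real-valued cases $\Phi(w)=+\infty$ when manipulating the ratio defining $\tilde\Phi$, and making sure that the specific test points $w=2u$ and $w=0$ indeed lie in $\Hsavneg(\Tn)$ and make $\Phi$ finite when $\Phi(u)<\infty$ (if $\Phi(u)=+\infty$ then $\partial_{\Hsavneg}\Phi(u)=\emptyset$ and the claimed equivalence holds vacuously, since $(u,v)_{\Hsavneg}=\Phi(u)$ cannot hold for $v\in\Hsavneg(\Tn)$). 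Lemmas \ref{lem1a} and \ref{lem1b} are not actually needed for this particular proof, though they set up the duality framework in which the statement lives.
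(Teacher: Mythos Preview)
Your proof is correct and is precisely the standard argument; the paper itself does not spell it out but simply refers to \cite[Theorem 1.8]{AndMazCas04}, whose proof proceeds exactly along the lines you describe. The only cosmetic point worth tightening is the case $\Phi(w)=0$ with $w\neq 0$ when bounding $\tilde\Phi_{\Hsavneg}(v)$: there your inequality $(v,w)_{\Hsavneg}\le\Phi(w)=0$ already forces the quotient in \eqref{defPhitilde} to be nonpositive, so the supremum is still $\le 1$.
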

\begin{proof}
See proof of \cite[Theorem 1.8]{AndMazCas04}
\end{proof}

Further we will characterize the subdifferential of $\Phi$ in $\Hsavneg(\Tn)$. 
We define
\begin{equation}
X(\Tn):=\{z\in L^{\infty}(\Tn;\mathbb{R}^n) \ : \ \div z\in \Hsav(\Tn)\}
\end{equation}

In the next lemma, we will show that $\tilde{\Phi}(u)=\Psi(u)$, where $\Psi$ is ginve by (\ref{defPsi}). This will enable us to characterize the subdifferential of  $\Phi$. In the proof, we follow results presented in \cite[Lemma 8.5, Proposition 8.6]{GigKurMat14} and \cite{GigKoh11}.

\begin{lemma}
\label{lem2}
Let $\Psi$ be the functional defined by
\begin{equation}
\label{defPsi}
\Psi(w):=\inf \{\norm{z}_{L^{\infty}(\Tn)} \ : \ z\in X(\Tn),\ w=-\Dsav \div z \}\,.
\end{equation}
Then $\Psi(w)=\tilde{\Phi}(w)$ for $w\in \Hsavneg(\Tn)$, where $\tilde{\Phi}$ is the functional dual to $\Phi$.
\end{lemma}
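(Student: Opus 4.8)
The plan is to establish the identity $\Psi(w)=\tilde{\Phi}(w)$ by a two-sided inequality, using the duality pairing between $H^s_{\mathrm{av}}$ and $H^{-s}_{\mathrm{av}}$ together with the definition of total variation as a supremum over test vector fields. First I would unwind the definition of $\tilde\Phi$: for $w\in\Hsavneg(\Tn)\setminus\{0\}$,
\[
\tilde\Phi(w)=\sup\left\{\frac{(v,w)_{\Hsavneg}}{\Phi(v)}\ :\ v\in\Hsavneg(\Tn),\ \Phi(v)<\infty,\ \Phi(v)\neq 0\right\}.
\]
The key observation is that for $v\in BV(\Tn)\cap\Hsavneg(\Tn)$ and $w=-\Dsav\div z$ with $z\in X(\Tn)$, one has $(v,w)_{\Hsavneg}=\langle\Dsavneg w,v\rangle=-\langle\div z,v\rangle=\int_{\Tn}\nabla v\cdot z$ after an integration by parts (justified because $\div z\in\Hsav(\Tn)$ pairs with $v\in\Hsavneg(\Tn)$, and one approximates $z$ by smooth fields to use the very definition of $\int_{\Tn}|Dv|$). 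Hence $(v,w)_{\Hsavneg}\le\norm{z}_{L^\infty}\int_{\Tn}|Dv|=\norm{z}_{L^\infty}\Phi(v)$. Taking the supremum over $v$ and then the infimum over all admissible $z$ gives $\tilde\Phi(w)\le\Psi(w)$.

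For the reverse inequality $\Psi(w)\le\tilde\Phi(w)$ I would use a Hahn–Banach / duality argument rather than trying to exhibit an optimal $z$ directly. One route: show first that $\Psi$ is itself convex, lower semicontinuous, positively $1$-homogeneous on $\Hsavneg(\Tn)$ (the infimum defining $\Psi$ inherits convexity and homogeneity from the linear constraint $w=-\Dsav\div z$ and the norm $\norm{z}_{L^\infty}$; lower semicontinuity needs a weak-$\ast$ compactness argument on bounded sets of $z$ in $L^\infty$, extracting a limit field and passing to the limit in the constraint in the sense of distributions). Then compute $\tilde\Psi$ and show $\tilde\Psi=\Phi$ using Lemma~\ref{lem1a} applied to the inequality already proven ($\tilde\Phi\le\Psi$ gives $\tilde\Psi\le\tilde{\tilde\Phi}=\Phi$ by Lemma~\ref{lem1b}), together with a direct check that $\tilde\Psi\ge\Phi$: for any $u\in BV\cap\Hsavneg$ and any smooth $z$ with $\norm{z}_\infty\le 1$, the element $w_z:=-\Dsav\div z$ satisfies $\Psi(w_z)\le 1$ and $(u,w_z)_{\Hsavneg}=\int\nabla u\cdot z$, and taking the supremum over such $z$ recovers $\Phi(u)$, so $\tilde\Psi(u)\ge\Phi(u)$. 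Combining, $\tilde\Psi=\Phi$, hence by Lemma~\ref{lem1b} again $\Psi=\tilde{\tilde\Psi}=\tilde\Phi$, provided $\Psi$ is convex, l.s.c. and $1$-homogeneous — which is exactly what was checked.

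Alternatively, and perhaps more cleanly, I would prove $\Psi\le\tilde\Phi$ directly by a Fenchel–Rockafellar argument: fix $w$, and consider on the space $X(\Tn)$ (or $L^\infty(\Tn;\mathbb{R}^n)$) the problem of minimizing $\norm{z}_{L^\infty}$ subject to the affine constraint $-\Dsav\div z=w$; its dual problem is precisely a supremum of the form appearing in $\tilde\Phi$, and strong duality (no gap) follows once one verifies a constraint qualification, namely that the affine constraint set has nonempty interior in an appropriate sense, or that $w$ lies in the range of $z\mapsto -\Dsav\div z$ with the relevant coercivity. This is essentially the content of \cite[Lemma 8.5, Proposition 8.6]{GigKurMat14} transported to the $H^{-s}$ setting.

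The main obstacle I anticipate is the lower semicontinuity / attainment in the definition of $\Psi$, i.e.\ controlling the weak-$\ast$ limit of a minimizing sequence $z_k$ in $L^\infty(\Tn;\mathbb{R}^n)$: one gets a weak-$\ast$ limit $z_\infty$ with $\norm{z_\infty}_\infty\le\liminf\norm{z_k}_\infty$ and $\div z_k\to\div z_\infty$ in distributions, but one must ensure $\div z_\infty\in\Hsav(\Tn)$ and that $-\Dsav\div z_\infty=w$ — this requires the constraint to pass to the limit in the $H^{-s}$ topology, which is where the specific structure of $\Dsav$ as an isometry $\Hsav\to\Hsavneg$ and the compact embeddings of fractional Sobolev spaces on the torus come into play. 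The integration-by-parts identity $-\langle\div z,v\rangle=\int_{\Tn}\nabla v\cdot z$ for $v\in BV$, $z\in X(\Tn)$ also needs care (a density argument smoothing $z$ while keeping $\div z$ bounded in $\Hsav$), but this is standard once the function spaces are set up. Everything else reduces to Lemmas~\ref{lem1a}, \ref{lem1b} and bookkeeping with the Fourier-side definitions of the norms.
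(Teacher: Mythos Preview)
Your proposal is correct and follows essentially the same route as the paper: the inequality $\tilde\Phi\le\Psi$ via mollification of $z$ and the definition of total variation, and the reverse via the direct verification $\tilde\Psi\ge\Phi$ combined with Lemmas~\ref{lem1a} and~\ref{lem1b} (the paper uses only this one inequality rather than your full equality $\tilde\Psi=\Phi$, but the core computation is identical). Your flagging of the lower semicontinuity of $\Psi$ as the delicate point is apt---the paper asserts it without proof---and your weak-$\ast$ compactness sketch is the standard way to fill it in.
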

\begin{proof}
Let $w\in\Hsavneg(\Tn)$. First, we show $\tilde{\Phi}(w)\leq\Psi(w)$. If $\Psi(w)$ is infinite, then the inequality is obvious. Thus, assume that $\Psi(w)<\infty$, then there exists $z\in X(\Tn)$ such that
$$w=-\Dsav\div z \  , \  \norm{z}_{L^{\infty}(\Tn)}=\Psi(u)\,.$$
Take $z^{\varepsilon}=z\ast\rho^{\varepsilon}$, where $\rho$ is a molifier. In \cite{Kas12}, Kashima showed that
$\norm{z^{\varepsilon}}_{L^{\infty}(\Tn)}\leq \norm{z}_{L^{\infty}(\Tn)}$
and
$\div z^{\varepsilon}\rightarrow\div z$ in $\Hsav(\Tn)$
as $\varepsilon\rightarrow 0$.
Then using this result, we obtain
\begin{equation}
\begin{split}
(w,v)_{\Hsavneg(\Tn)}
&=\langle -\div z,v\rangle 
= \lim_{\varepsilon\rightarrow 0}\langle -\div z^{\varepsilon},v\rangle\\
&\leq \limsup_{\varepsilon\rightarrow 0} \norm{z^{\varepsilon}}_{L^{\infty}(\Tn)} \Phi(v)
\leq \norm{z}_{L^{\infty}(\Tn)} \Phi(v)
\end{split}
\end{equation}
for all $v\in \Hsavneg(\Tn)$. Therefore, we get that
$$\tilde{\Phi}(w)=\sup \left\{ \frac{(w,v)_{\Hsavneg}}{\Phi(v)} \ : \ v\in \Hsavneg(\Tn)\setminus\{0\}\right\}
\leq \norm{z}_{L^{\infty}(\Tn)} = \Psi(w)\,,$$
which is the desired inequality.
Now we prove that $\tilde{\Phi}(w)\geq\Psi(w)$. 
To do so, we first note that if we prove $\Phi(w)\leq\tilde{\Psi}(w)$ then by Lemma \ref{lem1a} we get that $\tilde{\Phi}(w)\geq\tilde{\tilde{\Psi}}(w)$. Furthermore, since $\Psi$ is convex, lower-semicontinuous, and positively homogeneous of degree one, we get that $\Psi=\tilde{\tilde{\Psi}}$. This will imply the desired inequality.

Therefore, we need to prove that $\Phi(w)\leq\tilde{\Psi}(w)$. By the definition of $\Psi$, we get that
\begin{equation}
\begin{split}
\tilde{\Psi}(w)&=\sup \left\{ \frac{(w,v)_{\Hsavneg}}{\Psi(v)} \ : \ v\in \Hsavneg(\Tn)\setminus\{0\}\right\}\\
&\geq  
\sup \left\{ \frac{\langle w,\div z\rangle}{\norm{z}_{L^\infty(\Tn)}} \ : \ z\in C^{\infty}(\Tn;\mathbb{R}^n)\setminus\{0\}\right\}
= \Phi(w)\,.
\end{split}
\end{equation}
\end{proof}

\begin{theorem}
\label{thm1}
Assume that $u\in \Hsavneg(\Tn)$ is such that $\Phi(u)<\infty$. We denote by $\partial_{\Hsavneg}\Phi$  the subdifferential of $\Phi$ with respect to $\Hsavneg(\Tn)$-topology. Then, $v\in \partial_{H^{-s}} \Phi(u)$ if and only if there exists $z\in X(\Tn)$ such that
\begin{equation}
\left\{\begin{array}{lcl} 
v = -\Dsav \div z\,,\\[0.2cm]
\norm{z}_{L^{\infty}(\Tn)}\leq 1\,,\\[0.2cm]
(u, -\Dsav\div z)_{\Hsavneg(\Tn)}=\int_{\Tn}|D u|\,.
\end{array}\right.
\end{equation}
\end{theorem}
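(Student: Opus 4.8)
The plan is to combine the abstract characterization of the subdifferential from Lemma~\ref{lem1} with the representation of the dual functional $\tilde\Phi$ obtained in Lemma~\ref{lem2}. By Lemma~\ref{lem1}, applied with $H=\Hsavneg(\Tn)$, we have $v\in\partial_{\Hsavneg}\Phi(u)$ if and only if $\tilde\Phi(v)\le 1$ and $(u,v)_{\Hsavneg}=\Phi(u)=\int_{\Tn}|Du|$. So the theorem reduces to translating these two conditions into the stated system involving a vector field $z\in X(\Tn)$.

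First I would check that $\Phi$ satisfies the hypotheses of Lemma~\ref{lem1}: it is nonnegative, positively one-homogeneous (the total variation is), convex, and lower semicontinuous on $\Hsavneg(\Tn)$ (lower semicontinuity of total variation under, say, $L^1$-convergence, together with the fact that $\Hsavneg$-convergence on the zero-mean torus controls enough; this is routine and I would cite the standard BV lower semicontinuity). Then, for the ``only if'' direction: given $v\in\partial_{\Hsavneg}\Phi(u)$, Lemma~\ref{lem1} gives $\tilde\Phi(v)\le 1$, hence by Lemma~\ref{lem2}, $\Psi(v)=\tilde\Phi(v)\le 1$, i.e. $\inf\{\norm{z}_{L^\infty}: z\in X(\Tn),\ v=-\Dsav\div z\}\le 1$. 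Here is the one genuinely delicate point: this is an infimum, and I need it to be attained so that I can produce an actual $z$ with $v=-\Dsav\div z$ and $\norm{z}_{L^\infty}\le 1$. I would argue attainment by a weak-$*$ compactness argument in $L^\infty(\Tn;\mathbb{R}^n)$: take a minimizing sequence $z_k$ with $\norm{z_k}_{L^\infty}\to\Psi(v)$, extract a weak-$*$ limit $z$, note $\norm{z}_{L^\infty}\le\liminf\norm{z_k}_{L^\infty}=\Psi(v)$ by weak-$*$ lower semicontinuity of the norm, and pass to the limit in $v=-\Dsav\div z_k$ in the sense of distributions (or in $\Hsavneg$), which also shows $\div z\in\Hsav$ so $z\in X(\Tn)$. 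The third equation $(u,-\Dsav\div z)_{\Hsavneg}=\int_{\Tn}|Du|$ is then exactly the second condition of Lemma~\ref{lem1}, $(u,v)_{\Hsavneg}=\Phi(u)$, rewritten using $v=-\Dsav\div z$.

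For the ``if'' direction: suppose $z\in X(\Tn)$ satisfies the three conditions. Set $v:=-\Dsav\div z$. From $\norm{z}_{L^\infty}\le 1$ we get $\Psi(v)\le\norm{z}_{L^\infty}\le 1$, hence $\tilde\Phi(v)=\Psi(v)\le 1$ by Lemma~\ref{lem2}. The third condition says $(u,v)_{\Hsavneg}=\int_{\Tn}|Du|=\Phi(u)$. Now Lemma~\ref{lem1} applies in the reverse direction and yields $v\in\partial_{\Hsavneg}\Phi(u)$, which completes the proof. I expect the main obstacle to be the attainment of the infimum in $\Psi$ (equivalently, producing the explicit $z$ from $\tilde\Phi(v)\le 1$); everything else is a direct unwinding of the two earlier lemmas, with only the standard care needed to make the duality pairings $\langle\div z,v\rangle$ and the inner product $(\cdot,\cdot)_{\Hsavneg}$ match up via the isometry $\Dsav$.
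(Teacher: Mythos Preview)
Your proposal is correct and follows exactly the same route as the paper: invoke Lemma~\ref{lem1} to reduce to the two conditions $\tilde\Phi(v)\le 1$ and $(u,v)_{\Hsavneg}=\Phi(u)$, then replace $\tilde\Phi$ by $\Psi$ via Lemma~\ref{lem2}. In fact you are more careful than the paper on one point: the paper's proof stops at ``$\Psi(v)\le 1$'' without saying why the infimum defining $\Psi$ is attained, whereas you correctly flag this and supply the standard weak-$*$ compactness argument in $L^\infty$ to produce an actual $z\in X(\Tn)$ with $\norm{z}_{L^\infty}\le 1$.
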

\begin{proof}
By Lemma \ref{lem1}, we see that 
$v\in \partial_{\Hsavneg} \Phi(u)$ if and only if $\tilde{\Phi}_{\Hsavneg}(v)\leq 1$ and $(u,v)_{\Hsavneg(\Tn)}=\Phi(u)$. Moreover, $\Psi=\tilde{\Phi}$ holds by Lemma \ref{lem2}. These conditions are equivalent to
$$\Psi(v)\leq 1 \ \text{ and } (u,v)_{\Hsavneg(\Tn)}=\int_{\Tn} |D u|\,.$$
\end{proof}

Theorem \ref{thm1} gives a rigorous interpretation of the equation (\ref{eq_hsneg}) as
\begin{equation}
\label{eq_rigorous}
\left\{\begin{array}{lll} 
\dfrac{d u}{d t}(t)\in-\partial_{\Hsavneg}  \Phi(u(t))& \text{ in } \Hsavneg(\Tn) \text{ for a.e. } t\in (0,\infty)\,,\\[0.2cm]
u(0)=u_0 & \text{ in }  \Hsavneg(\Tn)\,. &
\end{array}\right.
\end{equation}

The result concerning the existence and uniqueness of a solution to  system (\ref{eq_rigorous}) is given in the theorem below.

\begin{theorem}
Let $\mathcal{A}(u) := \partial_{\Hsavneg} \Phi(u)$ and suppose that $u_0\in D(\mathcal{A})$. 
Then, there exists a unique function $u:[0,\infty)\rightarrow \Hsavneg(\Tn)$ such that:
\begin{enumerate}
\setlength\itemsep{0.1cm}
\item[$\mathrm{(1)}$] for all $t>0$ we have that $u(t)\in D(A)$,
\item[$\mathrm{(2)}$] $\frac{du}{dt}\in L^\infty(0,\infty, L^2(\Tn))$ and 
$\norm{\frac{du}{dt}}_{\Hsavneg(\Tn)}
\leq \norm{\mathcal{A}^0 (u_0)}_{\Hsavneg(\Tn)}$,
\item[$\mathrm{(3)}$]
 $\frac{du}{dt}\in -\mathcal{A}(u(t))$ a.e. on $(0,\infty)$
\item[$\mathrm{(4)}$]
$u(0)=u_0$.
\end{enumerate}
\end{theorem}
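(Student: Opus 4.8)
The theorem is a standard existence-uniqueness result for gradient flows of convex lsc functionals on a Hilbert space, so my plan is to invoke the Brezis--K\=omura theory of maximal monotone operators rather than to reprove it from scratch.

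\smallskip
\noindent\textbf{Proof proposal.}
The plan is to recognize $\Phi$ as a proper, convex, lower semi-continuous functional on the Hilbert space $\Hsavneg(\Tn)$, so that its subdifferential $\mathcal{A}=\partial_{\Hsavneg}\Phi$ is a maximal monotone operator, and then quote the classical theory of gradient flows generated by such operators (Brezis \cite{Ber73}; K\=omura \cite{Ko}). First I would verify the three hypotheses on $\Phi$ viewed as a functional on $\Hilb:=\Hsavneg(\Tn)$: convexity (clear, since $\Phi$ is, away from its infinite values, the total variation, which is convex, and the constraint set $BV\cap\Hsavneg$ is convex), properness (there are nonzero $u\in BV(\Tn)\cap\Hsavneg(\Tn)$, e.g.\ smooth mean-zero functions, with $\Phi(u)<\infty$), and lower semi-continuity with respect to the $\Hsavneg$-norm. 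The last point is the one requiring a small argument: if $u_k\to u$ in $\Hsavneg(\Tn)$ with $\liminf\Phi(u_k)<\infty$, then along a subsequence the $u_k$ have bounded total variation, hence (after passing to a further subsequence) converge in $L^1(\Tn)$ and weakly-$*$ as measures to some limit, which must agree with $u$ by uniqueness of limits in the distributional sense; lower semi-continuity of total variation under $L^1$-convergence then gives $\Phi(u)\le\liminf\Phi(u_k)$. Here one uses the compact embedding $BV(\Tn)\hookrightarrow L^1(\Tn)$ together with the continuity of the inclusion of a ball of $BV\cap\Hsavneg$ into $\Hsavneg$.

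\smallskip
Once $\Phi$ is known to be proper, convex and lsc on $\Hilb$, the abstract theorem (Brezis \cite[Th\'eor\`eme~3.1 and its regularizing corollaries]{Ber73}) states exactly: for every $u_0\in\overline{D(\partial_{\Hilb}\Phi)}$ there is a unique $u\in C([0,\infty);\Hilb)$, absolutely continuous on $(0,\infty)$, with $u(0)=u_0$, $u(t)\in D(\partial_{\Hilb}\Phi)$ for all $t>0$, $\tfrac{du}{dt}(t)\in -\partial_{\Hilb}\Phi(u(t))$ for a.e.\ $t>0$, and the quantitative bound $\norm{\tfrac{du}{dt}(t)}_{\Hilb}\le\norm{\mathcal{A}^0(u_0)}_{\Hilb}$ for $t>0$, where $\mathcal{A}^0(u_0)$ is the minimal-norm element (canonical restriction) of $\partial_{\Hilb}\Phi(u_0)$; in particular $\tfrac{du}{dt}\in L^\infty(0,\infty;\Hilb)$. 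Since we assume $u_0\in D(\mathcal{A})\subset\overline{D(\mathcal{A})}$, all four asserted properties (1)--(4) are contained in this statement, with (2) being the regularizing $L^\infty$-bound and the fact that $L^2(\Tn)\hookrightarrow\Hsavneg(\Tn)$ continuously so the same estimate can be stated in either norm. I would simply state properties (1)--(4) and cite the relevant pages of \cite{Ber73}, noting that Theorem~\ref{thm1} of the present paper identifies $\partial_{\Hsavneg}\Phi$ explicitly, which makes the notion of solution in \eqref{eq_rigorous} meaningful.

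\smallskip
\noindent\textbf{Main obstacle.}
The only genuine work is the $\Hsavneg$-lower semi-continuity of $\Phi$ in step one, because the natural topology for total variation is the $L^1$-topology, not the $\Hsavneg$-topology, and one must interpolate between the two; this is routine but not entirely trivial and would be the step I would write out in a sentence or two. Everything else is a verbatim appeal to the maximal-monotone-operator machinery, and I would not reproduce its proof.
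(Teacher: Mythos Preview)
Your proposal is correct and takes essentially the same approach as the paper, which simply cites Brezis \cite{Ber73} for the entire proof without further comment. In fact you supply more detail than the paper does, since you sketch the verification that $\Phi$ is proper, convex, and $\Hsavneg$-lower semi-continuous, whereas the paper leaves these hypotheses implicit.
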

\begin{proof}
The proof of this theorem can be found in Brezis \cite{Ber73}.
\end{proof}

\section{A semi-discretization}

In order to construct the scheme to solve the equation (\ref{eq_rigorous}), we introduce its semi-discretization with respect to the time variable $t$. That is, we consider the finite set of $n+1$ equidistant points 
$\{t_i = i\tau \ : \ i=0,\ldots,n \text{ and }\tau = T/n\}$ in the interval $[0,T]$.
For $i=1,\ldots,n$, we define inductively a  $u_{\tau}(t_i)$ as a solution of 
\begin{equation}
\label{semidiscrete}
\dfrac{u_{\tau}(t_i)-u_{\tau}(t_{i-1})}{\tau}\in-\partial_{\Hsavneg} \Phi(u_{\tau}(t_i)) \quad \text{ in }\Hsavneg(\Tn)\,.
\end{equation} 
It is well known that if $\mathcal{A} = \partial_{\Hsavneg} \Phi$ is monotone, then the resolvent $\mathcal{J}_\tau := (I+\tau \mathcal{A})^{-1}$ is non-expansive, which implies that the implicit scheme (\ref{semidiscrete}) is stable and we have
$$u_\tau(t_i) = \mathcal{J}_{\tau}^i u_0\,.$$
This result has been justified in a very general setting
by Crandall-Liggett \cite{CraLig71}: 
\begin{theorem}
For all $u_0\in \overline{D(\mathcal{A})}$ and $t>0$ we have that
$$\lim_{\tau \rightarrow 0,\ t_i\rightarrow t} \mathcal{J}_{\tau}^i u_0 = S(t) u_0\,,$$
where $S(t)$ is the semigroup generated by $-\mathcal{A}$. The convergence is uniform on compact intervals of $[0,\infty)$.
\end{theorem}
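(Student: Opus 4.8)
The preceding existence theorem already produces the semigroup $S(t)$ generated by $-\mathcal{A}$, so the only thing left to show is that the implicit Euler iterates $\mathcal{J}_\tau^i u_0$ converge to $S(t)u_0$; I would therefore not reprove the Crandall--Liggett generation result from scratch, but instead run a direct discrete-to-continuous comparison, which is short and uses only structure already in hand. First I would record that, since $\mathcal{A}=\partial_{\Hsavneg}\Phi$ is maximal monotone on the Hilbert space $\Hsavneg(\Tn)$, Minty's theorem makes $\mathcal{J}_\tau=(I+\tau\mathcal{A})^{-1}$ a single-valued, everywhere-defined, nonexpansive map, so every $\mathcal{J}_\tau^i$ is nonexpansive, and so is $S(t)$ (differentiate $\norm{S(t)u_0-S(t)v_0}_{\Hsavneg}^2$ and use monotonicity of $\mathcal{A}$). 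An $\varepsilon/3$ argument then reduces the claim to $u_0\in D(\mathcal{A})$: for $u_0^k\in D(\mathcal{A})$ with $u_0^k\to u_0$,
$$\norm{\mathcal{J}_\tau^i u_0-S(t)u_0}_{\Hsavneg}\le 2\,\norm{u_0-u_0^k}_{\Hsavneg}+\norm{\mathcal{J}_\tau^i u_0^k-S(t)u_0^k}_{\Hsavneg},$$
with the first term small uniformly in $\tau$ and $i$.

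For the main estimate I would fix $u_0\in D(\mathcal{A})$, set $u_i:=\mathcal{J}_\tau^i u_0$, and introduce the piecewise-linear-in-time interpolant $\hat u_\tau$ of the $u_i$ (so $\hat u_\tau(t_i)=u_i$) and the piecewise-constant interpolant $\bar u_\tau$ (so $\bar u_\tau\equiv u_i$ on $(t_{i-1},t_i]$). By the scheme \eqref{semidiscrete} one has $\hat u_\tau'(t)=\delta_i\in-\mathcal{A}(\bar u_\tau(t))$ on $(t_{i-1},t_i)$, where $\delta_i:=(u_i-u_{i-1})/\tau$; since nonexpansiveness forces the increments $\norm{u_{k+1}-u_k}_{\Hsavneg}$ to be nonincreasing in $k$ and $\norm{u_1-u_0}_{\Hsavneg}\le\tau\norm{\mathcal{A}^0u_0}_{\Hsavneg}=:\tau L$, one gets $\norm{\delta_i}_{\Hsavneg}\le L$ and $\norm{\hat u_\tau(t)-\bar u_\tau(t)}_{\Hsavneg}\le\tau L$. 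Comparing $\hat u_\tau$ with $u(\cdot):=S(\cdot)u_0$ --- which by items (2)--(3) of the preceding theorem is Lipschitz with $\norm{u'}_{\Hsavneg}\le L$ and solves $u'\in-\mathcal{A}(u)$ --- then splitting $\hat u_\tau-u=(\bar u_\tau-u)+(\hat u_\tau-\bar u_\tau)$, applying monotonicity of $\mathcal{A}$ to the pair $\bigl(\bar u_\tau(t),u(t)\bigr)$ and the bound on $\hat u_\tau-\bar u_\tau$, one obtains
$$\frac{d}{dt}\,\frac{1}{2}\norm{\hat u_\tau(t)-u(t)}_{\Hsavneg}^2\le\norm{\delta_i-u'(t)}_{\Hsavneg}\,\norm{\hat u_\tau(t)-\bar u_\tau(t)}_{\Hsavneg}\le 2L^2\tau.$$
Since $\hat u_\tau(0)=u(0)=u_0$, integrating gives $\norm{\hat u_\tau(t)-u(t)}_{\Hsavneg}\le 2L\sqrt{\tau t}$, hence $\norm{\mathcal{J}_\tau^i u_0-S(t_i)u_0}_{\Hsavneg}\le 2L\sqrt{\tau\,t_i}$.

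To finish: given $t>0$, if $\tau\to0$ and $t_i\to t$ then $\norm{S(t_i)u_0-S(t)u_0}_{\Hsavneg}\to0$ by strong continuity of $S$, while $2L\sqrt{\tau t_i}\to0$; so $\mathcal{J}_\tau^i u_0\to S(t)u_0$ for $u_0\in D(\mathcal{A})$, and since the bound is uniform for $t_i$ in any compact interval $[0,T]$ the convergence is uniform there, and the $\varepsilon/3$ reduction extends this to every $u_0\in\overline{D(\mathcal{A})}$, uniformly on compacts. (If one wanted to avoid invoking the a priori existence of $S$, the alternative is the genuine Crandall--Liggett argument: the resolvent identity $\mathcal{J}_\lambda x=\mathcal{J}_\mu\bigl(\tfrac{\mu}{\lambda}x+(1-\tfrac{\mu}{\lambda})\mathcal{J}_\lambda x\bigr)$ plus nonexpansiveness yields, for $\mu\le\lambda$, the two-index recursion $\norm{\mathcal{J}_\lambda^n x-\mathcal{J}_\mu^m x}\le\tfrac{\mu}{\lambda}\norm{\mathcal{J}_\lambda^{n-1}x-\mathcal{J}_\mu^{m-1}x}+(1-\tfrac{\mu}{\lambda})\norm{\mathcal{J}_\lambda^n x-\mathcal{J}_\mu^{m-1}x}$, which one solves by induction against $\norm{\mathcal{J}_\lambda^n x-x}\le n\lambda L$ to get $\norm{\mathcal{J}_\lambda^n x-\mathcal{J}_\mu^m x}\le L\{(n\lambda-m\mu)^2+n\lambda^2+m\mu^2\}^{1/2}$; this is Cauchy as $\lambda,\mu\to0$ with $n\lambda,m\mu\to t$, and the limit is then checked to be a semigroup with generator $-\mathcal{A}$.)

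The hard part is the middle step: arranging the comparison so that every error term is genuinely $O(\tau)$ uniformly for $t$ in a compact interval, which rests on the a priori Lipschitz bound $\norm{u'(t)}_{\Hsavneg}\le\norm{\mathcal{A}^0u_0}_{\Hsavneg}$ and on the monotonicity of the increments $\norm{\mathcal{J}_\tau^{k+1}u_0-\mathcal{J}_\tau^k u_0}_{\Hsavneg}$ (or, in the Crandall--Liggett route, the inductive solution of the two-index recursion). The density reduction and the passage to the closure are routine.
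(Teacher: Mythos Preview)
Your argument is correct. The paper itself does not prove this theorem at all: it simply cites Crandall--Liggett \cite{CraLig71} and moves on, noting afterwards that Rulla \cite{Rul96} sharpened the error estimate. So there is no ``paper's own proof'' to compare against beyond the citation.

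What you have done is different from, and in this context cleaner than, the original Crandall--Liggett construction. Since the preceding theorem (quoted from Brezis) already hands you the semigroup $S(t)$ together with the Lipschitz bound $\norm{u'(t)}_{\Hsavneg}\le\norm{\mathcal{A}^0u_0}_{\Hsavneg}$, you can compare the discrete and continuous trajectories directly via monotonicity, and your splitting $\hat u_\tau-u=(\bar u_\tau-u)+(\hat u_\tau-\bar u_\tau)$ is exactly the right device: monotonicity kills the first piece and the second is $O(\tau)$ uniformly. This yields the explicit rate $\norm{\mathcal{J}_\tau^i u_0-S(t_i)u_0}_{\Hsavneg}\le 2\norm{\mathcal{A}^0u_0}_{\Hsavneg}\sqrt{\tau\,t_i}$ for $u_0\in D(\mathcal{A})$, which is the classical $O(\sqrt{\tau})$ estimate, and the density reduction to $\overline{D(\mathcal{A})}$ is routine as you say. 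The alternative you sketch at the end --- the two-index resolvent recursion --- is the genuine Crandall--Liggett route, needed only if one does not already possess $S(t)$; here it would be redundant, and you are right to relegate it to a parenthetical.
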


The original paper by Crandall-Liggett 
contains also an error estimate, which is not optimal. The optimal one
is of order $O(\tau^2)$ and it has been derived by Rulla \cite[Theorem 4]{Rul96}.

Hence, to solve (\ref{eq_rigorous}), we need to know how to find a solution to the one iteration of the scheme (\ref{semidiscrete}). First, we observe that the equation (\ref{semidiscrete}) for $u_{\tau}(t_i)$ is the optimality condition for the minimization problem
\begin{equation}
\label{primal}
\inf_{u}\left(\frac{1}{2\tau}\|u-u_{\tau}(t_{i-1})\|^2_{H_{\text{av}}^{-s}} + \Phi(u)\right)\,.
\end{equation}

The main difficulty in the construction of a minimizing sequence for such kind of problems is caused by the~lack of differentiability of the total variation term. The commonly used approach to overcome this difficulty consists in considering the dual formulation of (\ref{primal}). The~first work related to image processing application where this approach has been used and where the proof of the convergence has been provided, was the paper of Chambolle \cite{Chambolle2004}.
Nowadays, there is a variety of efficient schemes that one can apply to solve the dual formulation of (\ref{primal}). Among others, we mention here papers of Chambolle and Pock \cite{Pock2011} and Beck and Teboulle \cite{Beck2009}.
We also refer to papers of Aujol \cite{Aujol2009} and Weiss et al. \cite{Weiss2009}, where the authors adapt the existing methods for the purpose of the total variation minimization. In this work, we rather aim to present a consistent approach for construction of the minimizing sequence and to discuss its convergence in an infinite dimensional space. For this purpose, we consider the well-known splitting scheme introduced by Lions and Mercier~\cite{LioMer79}, which in fact is a variant of the classical Douglas-Rachford algorithm. 

\section{A dual problem}

Let $C\subset H$ be a non-empty convex and closed set. We define the indicator function of a set $C$ by
\begin{equation}
\label{defchi}
\chi_C(u) := \left\{\begin{array}{cl} 
 0 & \text{ if } u\in C\,,\\[0.1cm]
+\infty & \text{ otherwise}\,.
\end{array}\right.
\end{equation}

To drive the dual problem we will need the following result
\begin{lemma}
\label{lem7}
If we consider  $\Phi$ given by (\ref{defPhi}) as a functional definwed on $\Hsavneg$. Then,
the convex conjugate of 
$\Phi$ 
is given by
$\Phi^\ast(v)=\chi_{K}(v)$,
where $K$ is the closure of the set $$\{v\in \mathcal{D}'(\Tn) \ : \ v=-\Dsav \div z, \  z\in \mathcal{D}(\Tn),\ \norm{z}_{\infty}\leq 1\}$$
with respect to the strong $\Hsavneg(\Tn)$-topology.
\end{lemma}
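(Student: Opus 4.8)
The plan is to compute the Legendre--Fenchel conjugate $\Phi^\ast(v) = \sup_{u \in \Hsavneg} \left( (u,v)_{\Hsavneg} - \Phi(u) \right)$ directly and identify it with the indicator $\chi_K$. Since $\Phi$ is positively one-homogeneous (the total variation is), its conjugate is automatically an indicator function of some closed convex set: indeed, if $(u,v)_{\Hsavneg} > \Phi(u)$ for some $u$, then replacing $u$ by $\lambda u$ and letting $\lambda \to \infty$ drives the supremum to $+\infty$; otherwise the supremum is $0$, attained at $u=0$. Hence $\Phi^\ast(v) = 0$ exactly when $(u,v)_{\Hsavneg} \le \Phi(u)$ for all $u \in \Hsavneg$, i.e. when $\tilde\Phi(v) \le 1$ in the notation of \eqref{defPhitilde}, and $\Phi^\ast(v) = +\infty$ otherwise. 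So it suffices to show that $\{v : \tilde\Phi(v) \le 1\} = K$.

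The next step is to invoke Lemma \ref{lem2}, which identifies $\tilde\Phi$ with $\Psi$ from \eqref{defPsi}. Thus $\tilde\Phi(v) \le 1$ means there is a sequence $z_k \in X(\Tn)$ with $v = -\Dsav \div z_k$ — wait, more precisely the infimum defining $\Psi(v)$ is $\le 1$, so for every $\delta > 0$ there is $z \in X(\Tn)$ with $v = -\Dsav\div z$ and $\norm{z}_{L^\infty} \le 1 + \delta$. Rescaling $z$ by $1/(1+\delta)$ we may instead aim for the set $\{v : \Psi(v) \le 1\}$ and compare it to $K$. One inclusion is almost immediate: if $v = -\Dsav\div z$ with $z \in \mathcal D(\Tn)$ and $\norm{z}_\infty \le 1$, then $z \in X(\Tn)$ and $\Psi(v) \le 1$, and since $\{v : \Psi(v) \le 1\}$ is closed in $\Hsavneg$ (because $\Psi = \tilde\Phi$ is lower semicontinuous, being a supremum of continuous functionals), we get $K \subseteq \{v : \Psi(v)\le 1\}$. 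The reverse inclusion $\{v : \Psi(v) \le 1\} \subseteq K$ is where the work lies: given $z \in X(\Tn)$, i.e. merely $z \in L^\infty$ with $\div z \in \Hsav$, and $\norm z_\infty \le 1$, one must approximate it by smooth compactly supported vector fields $z_k \in \mathcal D(\Tn)$ with $\norm{z_k}_\infty \le 1$ (or $\le 1 + o(1)$, then rescale) such that $-\Dsav\div z_k \to -\Dsav\div z$ in $\Hsavneg$, equivalently $\div z_k \to \div z$ in $\Hsav$ since $\Dsav$ is an isometry. This is exactly the mollification argument already quoted from Kashima \cite{Kas12} in the proof of Lemma \ref{lem2}: $z^\ve = z \ast \rho^\ve$ satisfies $\norm{z^\ve}_\infty \le \norm z_\infty \le 1$ and $\div z^\ve \to \div z$ in $\Hsav$, and on the torus $z^\ve$ is already smooth (periodicity plays the role of compact support), so $v^\ve := -\Dsav\div z^\ve$ lies in the generating set of $K$ and converges to $v$ in $\Hsavneg$; hence $v \in K$.

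The main obstacle is thus the density step, but it is essentially settled by the Kashima mollification estimate invoked earlier, so the only care needed is bookkeeping: handling the $1+\delta$ slack coming from the infimum in the definition of $\Psi$ (one can first rescale $v$ slightly inside the closed set $K$, which is star-shaped about $0$, or argue that $K$ is precisely the closed convex hull so that a sequence approaching the boundary value $1$ still has its limit in $K$), and checking that the torus setting lets us dispense with the distinction between $C_0^1(\Tn;\mathbb R^n)$ and $C^\infty(\Tn;\mathbb R^n)$ used variously in the total-variation definition and in the proof of Lemma \ref{lem2}. I would also remark that convexity and closedness of $K$ are needed for $\chi_K$ to be a legitimate conjugate function (a conjugate is always convex and lower semicontinuous), and both follow since $K$ is by construction a closed set that equals the sublevel set $\{\Psi \le 1\}$ of a convex functional.
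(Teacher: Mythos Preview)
Your argument is correct, but the paper takes a much shorter route. Rather than computing $\Phi^\ast$ and then identifying the resulting sublevel set $\{\tilde\Phi\le 1\}$ with $K$ via Lemma~\ref{lem2} and a mollification argument, the paper computes the conjugate in the other direction: it shows $\chi_K^\ast = \Phi$ directly, and then concludes $\Phi^\ast = \chi_K^{\ast\ast} = \chi_K$ by biconjugation. The point is that for $v = -\Dsav\div z$ with $z\in\mathcal D(\Tn)$ one has $(v,u)_{\Hsavneg} = -\langle \div z, u\rangle$, so
\[
\chi_K^\ast(u)=\sup_{v\in K}(v,u)_{\Hsavneg}=\sup\{\langle u,\div z\rangle : z\in\mathcal D(\Tn),\ \|z\|_\infty\le 1\}=\Phi(u),
\]
the last equality being exactly the definition of the total variation. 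This sidesteps Lemma~\ref{lem2} and the density step entirely. Your approach recovers the same identity but through the heavier machinery of $\tilde\Phi=\Psi$ and Kashima's approximation; what it buys is an explicit description of $\Phi^\ast$ as the indicator of a sublevel set of the polar functional, which is conceptually nice but not needed here since the set $K$ was already engineered to match the test functions in the definition of total variation.
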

\begin{proof}
Let $u\in BV(\Tn)\cap\Hsavneg(\Tn)$, then 
the convex conjugate of  function $\chi_{K}$ is given by
\begin{equation}
\begin{split}
\chi_{K}^\ast(u)
&=\sup_{v}\{(v,u)_{\Hsavneg} \ : \ v\in K\}\\
&=\sup_{z}\{\langle u,\div z \rangle \ : \ z\in \mathcal{D}(\Tn),\ \norm{z}_{\infty}\leq 1\}=\Phi(u)\,.
\end{split}
\end{equation}
Since the set $K$ is convex, the function $\chi_{K}$ is convex. Moreover, since $\chi_{K}$ is lower semi-continuous, then it follows from \cite[Proposition I.4.1]{Ekeland1999}
 
that $\Phi^\ast(v)=\chi_{K}^{\ast\ast}(v)=\chi_{K}(v)$.
\end{proof}
\begin{proposition}
Let $f\in \Hsavneg(\Tn)$ be a given function, then the problem 
\begin{equation}
\label{primal_problem}
\inf_{u\in \Hsavneg(\Tn)} \left\{\frac{1}{2\tau}\norm{u-f}_{\Hsavneg}^2+\Phi(u) \right\}
\end{equation}
is equivalent with
\begin{equation}
\label{dual_problem_v}
\inf_{v\in K} \left\{\frac{1}{2\tau}\norm{\tau v- f}_{\Hsavneg}^2\right\}\,.
\end{equation}
Moreover, the~solution~$u$ of   (\ref{primal_problem}) is
associated with the solution $v$ of  (\ref{dual_problem_v}) by the relation 
\begin{equation}
\label{relation_uv}
u=f-\tau v\,.
\end{equation}
\end{proposition}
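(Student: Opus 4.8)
The plan is to use standard convex-duality (Fenchel--Rockafellar) machinery applied to the decomposition of the primal functional in \eqref{primal_problem} as a sum of a smooth quadratic term and the convex functional $\Phi$, whose convex conjugate has already been identified in Lemma~\ref{lem7}. First I would write the primal objective as $F(u) + \Phi(u)$ with $F(u) := \frac{1}{2\tau}\norm{u-f}_{\Hsavneg}^2$, and recall that the Fenchel conjugate of $F$ on the Hilbert space $\Hsavneg(\Tn)$ is $F^\ast(v) = \frac{\tau}{2}\norm{v}_{\Hsavneg}^2 + (f,v)_{\Hsavneg}$, which is an elementary computation using the identification of $\Hsavneg(\Tn)$ with its own dual. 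By the Fenchel--Rockafellar duality theorem (the qualification hypothesis holds since $F$ is finite and continuous everywhere on $\Hsavneg(\Tn)$), the infimum in \eqref{primal_problem} equals $-\inf_{v}\bigl\{F^\ast(-v) + \Phi^\ast(v)\bigr\}$, and by Lemma~\ref{lem7} the term $\Phi^\ast(v) = \chi_K(v)$ restricts the dual variable to lie in $K$.

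Next I would simplify the dual objective on $K$: for $v \in K$ we have $F^\ast(-v) + \Phi^\ast(v) = \frac{\tau}{2}\norm{v}_{\Hsavneg}^2 - (f,v)_{\Hsavneg}$. Completing the square, $\frac{\tau}{2}\norm{v}_{\Hsavneg}^2 - (f,v)_{\Hsavneg} = \frac{1}{2\tau}\norm{\tau v - f}_{\Hsavneg}^2 - \frac{1}{2\tau}\norm{f}_{\Hsavneg}^2$, so minimizing this over $v \in K$ is equivalent to minimizing $\frac{1}{2\tau}\norm{\tau v - f}_{\Hsavneg}^2$ over $v \in K$, which is exactly \eqref{dual_problem_v}. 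This establishes the claimed equivalence of the two problems (their optimal values differing only by the explicit constant $-\frac{1}{2\tau}\norm{f}_{\Hsavneg}^2$ absorbed in the $-\inf$, with zero duality gap).

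To obtain the relation \eqref{relation_uv} between the solutions, I would invoke the extremality (Karush--Kuhn--Tucker) conditions that characterize a primal-dual optimal pair in Fenchel--Rockafellar duality: $u$ solves \eqref{primal_problem} and $v$ solves the dual iff $-v \in \partial F(u)$ and $v \in \partial \Phi(u)$. Since $F$ is differentiable with $\nabla F(u) = \frac{1}{\tau}(u - f)$ in $\Hsavneg(\Tn)$, the first condition reads $-v = \frac{1}{\tau}(u - f)$, i.e. $u = f - \tau v$, which is precisely \eqref{relation_uv}. One should also note that the primal problem admits a (unique) minimizer because $F$ is strongly convex and coercive and $\Phi$ is lower semicontinuous and convex on $\Hsavneg(\Tn)$, and that the dual admits a minimizer because $K$ is closed and convex and $v \mapsto \norm{\tau v - f}_{\Hsavneg}^2$ is coercive on $K$ in the $\tau v$ variable modulo the constraint --- strictly speaking coercivity of the dual functional must be checked since $v$ itself need not be bounded, but the projection characterization below sidesteps this.

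The main obstacle I anticipate is purely bookkeeping with the Hilbert-space self-duality identification: because the conjugates and subdifferentials are all taken with respect to the $\Hsavneg$-inner product rather than an abstract dual pairing, one must be careful that the Riesz identification is applied consistently throughout (in particular in computing $F^\ast$ and in reading off $\nabla F$), otherwise spurious factors of $\Dsav$ or $\Dsavneg$ appear. A clean alternative, which I would mention as a shortcut, is to observe directly that $\tau v^\ast$ is the $\Hsavneg$-orthogonal projection of $f$ onto the closed convex set $\tau K$, and that the primal minimizer is then the residual $u^\ast = f - \tau v^\ast$; this is just the statement that $f = u^\ast + \tau v^\ast$ is the orthogonal (Moreau) decomposition of $f$ with respect to the convex cone-like set $K$, and it gives \eqref{relation_uv} and the equivalence simultaneously without ever forming $F^\ast$ explicitly.
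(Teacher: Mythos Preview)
Your argument is correct and uses the same essential ingredient as the paper (Lemma~\ref{lem7} identifying $\Phi^\ast=\chi_K$), but the route is genuinely different. The paper does not compute $F^\ast$ or invoke the Fenchel--Rockafellar theorem at all: it simply writes the Euler--Lagrange inclusion $\frac{f-u}{\tau}\in\partial_{\Hsavneg}\Phi(u)$ for the primal, applies the subdifferential inversion $v\in\partial\Phi(u)\Leftrightarrow u\in\partial\Phi^\ast(v)$ from \cite[Corollary I.5.2]{Ekeland1999}, substitutes $\tau v=f-u$, and observes that the resulting inclusion $f-\tau v\in\partial_{\Hsavneg}\Phi^\ast(v)$ is precisely the Euler--Lagrange equation of the functional $\frac{1}{2\tau}\norm{\tau v-f}_{\Hsavneg}^2+\Phi^\ast(v)$. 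This is shorter and avoids any discussion of optimal values or qualification conditions; on the other hand, your approach places the result cleanly in the standard Fenchel--Rockafellar framework and yields the relation between the optimal \emph{values} (not just the minimizers) as a by-product. One minor remark: your caveat about coercivity of the dual is unnecessary, since $v\mapsto\frac{1}{2\tau}\norm{\tau v-f}_{\Hsavneg}^2$ is already coercive in $v$ on all of $\Hsavneg(\Tn)$ (because $\tau>0$), so existence of a dual minimizer on the closed convex set $K$ is immediate.
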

\begin{proof}

Using the standard result of convex analysis (see, e.g, \cite[Corollary I.5.2]{Ekeland1999}), we get that the~Euler-Lagrange equation 
\begin{equation}
\label{inc1}
\frac{f-u}{\tau}\in\partial_{\Hsavneg} \Phi(u)
\end{equation}
associated with the problem (\ref{primal_problem}) 
is equivalent to
\begin{equation}
\label{inc2}
u\in \partial_{\Hsavneg} \Phi^{\ast}\left(\frac{f-u}{\tau}\right)\,,
\end{equation}
where $\Phi^{\ast}$ is the convex conjugate functional of $\Phi$ in $\Hsavneg(\Tn)$. Setting $\tau v= f-u$, we rewrite equation~(\ref{inc2}) as
\begin{equation}
\label{inc3}
 f-\tau v\in \partial_{\Hsavneg} \Phi^{\ast}(v)\,,
\end{equation}
and we note that this is the Euler-Lagrange equation of the functional
$$\dfrac{1}{2\tau}\norm{\tau v - f}^2_{\Hsavneg}+ \Phi^{\ast}(v)\,.$$
Using Lemma \ref{lem7}, we conclude that the dual problem to (\ref{primal_problem}) is
given by (\ref{dual_problem_v}) and the relation (\ref{relation_uv}) holds.
\end{proof}

\begin{corollary}
\label{cor:projection}
The solution of problem (\ref{primal_problem}) satisfies $u = f-\tau P_K^{\Hsavneg}(f/\tau)$,
where $P_K^{\Hsavneg}$ denotes 
the orthogonal projection on the set $K$ with respect to the inner product in $\Hsavneg$ .
\end{corollary}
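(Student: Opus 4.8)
The plan is to read off the corollary directly from the Proposition together with the definition of the orthogonal projection onto a closed convex set in a Hilbert space. The Proposition already tells us that the primal solution $u$ and the dual solution $v$ are related by $u = f - \tau v$, where $v$ solves $\inf_{v\in K}\frac{1}{2\tau}\norm{\tau v - f}_{\Hsavneg}^2$. So the only thing left to do is identify $v$ as a projection.

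First I would rescale the dual variable. Writing $\frac{1}{2\tau}\norm{\tau v - f}_{\Hsavneg}^2 = \frac{\tau}{2}\norm{v - f/\tau}_{\Hsavneg}^2$, the dual problem becomes $\inf_{v\in K}\norm{v - f/\tau}_{\Hsavneg}^2$, whose unique minimizer is by definition the $\Hsavneg$-orthogonal projection of $f/\tau$ onto $K$, i.e. $v = P_K^{\Hsavneg}(f/\tau)$. Here I use that $K$ is a non-empty, closed, convex subset of the Hilbert space $\Hsavneg(\Tn)$ — non-emptiness and convexity are clear (it contains $0$, taking $z\equiv 0$, and it is an intersection/closure of convex sets), and closedness holds because $K$ was defined in Lemma \ref{lem7} precisely as a closure with respect to the strong $\Hsavneg$-topology — so the classical Hilbert space projection theorem applies and gives existence and uniqueness of the minimizer. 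Substituting this $v$ into the relation $u = f - \tau v$ from the Proposition yields $u = f - \tau P_K^{\Hsavneg}(f/\tau)$, which is the claim.

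\begin{proof}
By the Proposition, the solution $u$ of (\ref{primal_problem}) satisfies $u = f - \tau v$, where $v$ is the solution of the dual problem (\ref{dual_problem_v}). For $v \in \Hsavneg(\Tn)$ we have
$$\frac{1}{2\tau}\norm{\tau v - f}_{\Hsavneg}^2 = \frac{\tau}{2}\norm{v - f/\tau}_{\Hsavneg}^2,$$
so minimizing the left-hand side over $v \in K$ is equivalent to minimizing $\norm{v - f/\tau}_{\Hsavneg}$ over $v \in K$. Since $K$ is a non-empty, closed and convex subset of the Hilbert space $\Hsavneg(\Tn)$ — it is convex as the closure of a convex set and closed by its definition in Lemma \ref{lem7} — the projection theorem guarantees that this minimization problem has a unique solution, namely $v = P_K^{\Hsavneg}(f/\tau)$. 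Combining this with $u = f - \tau v$ gives $u = f - \tau P_K^{\Hsavneg}(f/\tau)$.
\end{proof}

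There is essentially no obstacle here: the corollary is a direct, almost bookkeeping, consequence of the Proposition once the dual functional is rewritten in the standard ``distance to a convex set'' form. The only point that deserves a line of care is the appeal to the Hilbert space projection theorem, which requires $K$ to be closed and convex in $\Hsavneg(\Tn)$; both properties are built into the definition of $K$ in Lemma \ref{lem7}, so no additional work is needed.
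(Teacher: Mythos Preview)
Your proof is correct and is exactly the intended argument: the paper states the corollary without proof, treating it as an immediate consequence of the preceding Proposition, and your rewriting of the dual functional as $\frac{\tau}{2}\norm{v-f/\tau}_{\Hsavneg}^2$ together with the Hilbert projection theorem is precisely how one reads it off. There is nothing to add.
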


\begin{remark}
Using the characterization of $v\in \partial_{H^{-s}} \Phi(u)$ provided in Theorem~\ref{thm1}, we can rewrite the dual problem (\ref{dual_problem_v}) as
\begin{equation}
\label{dual_problem_z}
\inf_{z\in Z} \left\{\frac{1}{2\tau}\norm{\tau \Dsav\div z + f}_{\Hsavneg}^2\right\}\,,
\end{equation}
where $Z$ is the closure of the set
$$\left\{z\in \mathcal{D}(\Tn) \ : \Dsav\div z \in \mathcal{D}'(\Tn)\,, \norm{z}_{\infty}\leq 1 \ \right\},$$
with respect to the strong  $L^2$-topology.
\end{remark}

\section{Convergence of a forward-backward splitting scheme}

Let us define the functional $J$ on $\Hsavneg(\Tn)$ by
\begin{equation*}
 J(v):=\left\{\begin{array}{ll} 
\dfrac{1}{2\tau}\norm{\tau v-f}_{\Hsavneg}^2 & \text{ if } v\in K\,,\\[0.2cm]
+\infty & \text{ otherwise}\,.
 \end{array}\right.
 \nonumber
\end{equation*}
Then the dual problem (\ref{dual_problem_v}) can be equivalently written as
$$\inf_{v\in \Hsavneg(\Tn)} \left( J(v) + \Phi^\ast(v) \right)\,.$$
In order to construct a  minimizing sequence $\{v^k\}$ for the above problem, we consider the forward-backward splitting scheme introduced by Lions and Mercier \cite{LioMer79}, given by
\begin{equation}
\label{scheme1}
\left\{\begin{array}{l} 
u^k \in -\partial_{\Hsavneg} J(v^k)\,, \\[0.2cm]
v^{k+1} = (I+\lambda \partial_{\Hsavneg} \Phi^\ast)^{-1}(v^k +\lambda u^k)\,.
\end{array}\right.
\end{equation}
\begin{remark}
Application of the scheme (\ref{scheme1}) requires that 
$$\partial_{\Hsavneg}(J + \Phi^\ast)(v) = \partial_{\Hsavneg}J(v) + \partial_{\Hsavneg}\Phi^\ast(v)$$ 
for all $v\in \Hsavneg(\Tn)$. This is ensured by  \cite[Proposition I.5.6]{Ekeland1999}, because $\hbox{int}\,(D(J))\cap D(\Phi^\ast)\neq \emptyset$.
\end{remark}

\begin{remark}
\label{rem_yosida}
Let $v\in \Hsavneg$, then by
Moreau's identity
$$v = (I+\lambda \partial_{\Hsavneg} \Phi^\ast)^{-1}(v) 
  +\lambda\left(I+1/\lambda\, \partial_{\Hsavneg} \Phi\right)^{-1}\left(v/\lambda\right)\,,$$
we obtain that the scheme (\ref{scheme1}) is equivalent to
\begin{equation}
\left\{\begin{array}{l} 
u^k \in -\partial_{\Hsavneg} J(v^k)\,, \\[0.2cm]
v^{k+1} = H_{1/\lambda}(v^k/\lambda + u^k)\,,
\end{array}\right.
\nonumber
\end{equation}
where $H_{1/\lambda}$ denotes the Yosida approximation of the operator  $\mathcal{A} = \partial_{\Hsavneg} \Phi$. It is well known that $H_{1/\lambda}$ converges as $\lambda\rightarrow \infty$ to the minimal selection $\mathcal{A}_0$ 
of~$\mathcal{A}$.
\end{remark}

In the proof of convergence of the sequence $\{v^k\}$ we will need the lemma below:
\begin{lemma}
\label{lem:ineq}
For $v\in K$ we have that if $u\in \partial_{\Hsavneg} \Phi^\ast(v)$, then
$(u,v-w)_{\Hsavneg}\geq 0$ for all $w\in K$.
\end{lemma}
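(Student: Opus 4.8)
\textbf{Proof plan for Lemma~\ref{lem:ineq}.}

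The statement to prove is that for $v\in K$ and $u\in\partial_{\Hsavneg}\Phi^\ast(v)$, we have $(u,v-w)_{\Hsavneg}\geq 0$ for all $w\in K$. The plan is to unpack the definition of the subdifferential of $\Phi^\ast$ together with the identification $\Phi^\ast=\chi_K$ from Lemma~\ref{lem7}. Concretely, $u\in\partial_{\Hsavneg}\Phi^\ast(v)$ means, by Definition~\ref{def:subdifferential}, that $\Phi^\ast(\tilde w)\geq\Phi^\ast(v)+(u,\tilde w-v)_{\Hsavneg}$ for all $\tilde w\in\Hsavneg(\Tn)$. Since $v\in K$ we have $\Phi^\ast(v)=\chi_K(v)=0$, so the inequality reads $\chi_K(\tilde w)\geq(u,\tilde w-v)_{\Hsavneg}$ for all $\tilde w$.

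Now I would simply restrict the test element $\tilde w$ to lie in $K$. For such $\tilde w=w\in K$ the left-hand side is $\chi_K(w)=0$, hence $0\geq(u,w-v)_{\Hsavneg}$, i.e. $(u,v-w)_{\Hsavneg}\geq 0$, which is exactly the claim. (For $\tilde w\notin K$ the inequality is vacuous since the left-hand side is $+\infty$, so those test elements carry no information — that is why only $w\in K$ matters.) This is the whole argument; it is a direct consequence of the fact that the subdifferential of an indicator function $\chi_K$ at a point $v\in K$ is the normal cone to $K$ at $v$, and $u$ belonging to that normal cone is precisely the stated inequality.

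There is no real obstacle here — the only thing to be slightly careful about is bookkeeping between the primal and dual Hilbert space structures: the lemma is stated entirely inside $\Hsavneg(\Tn)$ with its inner product $(\cdot,\cdot)_{\Hsavneg}$, and Lemma~\ref{lem7} gives $\Phi^\ast=\chi_K$ precisely when $\Phi$ is regarded as a functional on $\Hsavneg$, so the identification is directly applicable and no transfer of operators is needed. One should also note in passing that $K$ being convex and closed guarantees $\partial_{\Hsavneg}\chi_K(v)$ is nonempty and well-behaved, but the lemma only asserts the inequality for any $u$ that happens to lie in the subdifferential, so even this is not strictly required for the proof itself.
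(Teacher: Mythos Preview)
Your proof is correct and matches the paper's own argument essentially line for line: both unpack the subdifferential inequality for $\Phi^\ast=\chi_K$, use $\Phi^\ast(v)=0$ since $v\in K$, and restrict the test element to $w\in K$ to obtain the desired inequality. The only difference is that you add the normal-cone interpretation and a remark about nonemptiness, which are extra commentary rather than a different approach.
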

\begin{proof}
By the definition of the subdifferential $\partial_{\Hsavneg} \Phi^\ast(v)$, we have that
$u\in \partial_{\Hsavneg} \Phi^\ast(v)$ if and only if
$\Phi^\ast(w) \geq  \Phi^\ast (v) + (u,w-v)_{\Hsavneg}$
for all $w\in \Hsavneg(\Tn)$.
Since $\Phi^\ast$ is the indicator function of the set $K$, we have $(u,v-w)_{\Hsavneg}\geq 0$ for all $w\in K$.
\end{proof}

\begin{proposition}
\label{prop_converg_vk}
Assume that $0<\lambda\tau<2$, then $v^k\rightharpoonup v^\ast$ and $u^k\rightharpoonup u^\ast$ in $\Hsavneg$, where $v^\ast\in K$ is such that $v^\ast\in \partial_{\Hsavneg} \Phi(u^\ast)$ and $u^\ast = f-\tau v^\ast$.
\end{proposition}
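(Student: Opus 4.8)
The plan is to analyze the scheme (\ref{scheme1}) as a Douglas--Rachford / forward-backward iteration and to extract the weak limit using monotonicity of $\partial_{\Hsavneg}\Phi^\ast$ together with the simple quadratic structure of $J$. First I would write the first line of (\ref{scheme1}) explicitly: since $J(v)=\frac{1}{2\tau}\norm{\tau v-f}_{\Hsavneg}^2 + \chi_K(v)$ and $u^k\in-\partial_{\Hsavneg}J(v^k)$, we have $v^k\in K$ and, because the differentiable part has gradient $\tau(\tau v-f)=\tau^2 v - \tau f$, the inclusion reads $u^k + \tau^2 v^k - \tau f \in -\partial_{\Hsavneg}\chi_K(v^k) = -N_K(v^k)$, the (negative) normal cone. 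Equivalently $u^k = \tau f - \tau^2 v^k - n^k$ with $n^k\in N_K(v^k)$, and in particular, since $-n^k\in -N_K(v^k)$ means $(-n^k, v^k - w)_{\Hsavneg}\ge 0$ for $w\in K$, one gets the key variational inequality $(\tau f - \tau^2 v^k - u^k, v^k - w)_{\Hsavneg}\le 0$ for all $w\in K$; combined with Lemma~\ref{lem:ineq} applied at the next step this will give contraction-type estimates. (Alternatively, one may simply observe $u^k = -\nabla J_{\mathrm{smooth}}(v^k)$ projected appropriately; the cleanest route is to treat the whole thing as $v^{k+1}=(I+\lambda\partial\Phi^\ast)^{-1}(I-\lambda\,\partial J)(v^k)$, a forward-backward step, noting $\partial\Phi^\ast = N_K$ is maximal monotone and $\nabla J_{\mathrm{smooth}}$ is Lipschitz with constant $\tau^2$, so the forward step $I-\lambda\nabla J_{\mathrm{smooth}}$ is nonexpansive precisely when $\lambda\tau^2\le 2$, consistent with the hypothesis $\lambda\tau<2$ after rescaling.)

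Next I would establish Fej\'er monotonicity: if $v^\ast$ is any solution of the dual problem (\ref{dual_problem_v}) — equivalently $v^\ast = P_K^{\Hsavneg}(f/\tau)$ by Corollary~\ref{cor:projection}, so $0\in \partial_{\Hsavneg}J(v^\ast) + \partial_{\Hsavneg}\Phi^\ast(v^\ast)$, i.e. there is $u^\ast\in-\partial_{\Hsavneg}J(v^\ast)$ with $u^\ast\in\partial_{\Hsavneg}\Phi^\ast(v^\ast)$ — then I would compute $\norm{v^{k+1}-v^\ast}_{\Hsavneg}^2$, using that the resolvent $(I+\lambda\partial_{\Hsavneg}\Phi^\ast)^{-1}$ is firmly nonexpansive and that $v^{k+1}=(I+\lambda\partial_{\Hsavneg}\Phi^\ast)^{-1}(v^k+\lambda u^k)$ while $v^\ast=(I+\lambda\partial_{\Hsavneg}\Phi^\ast)^{-1}(v^\ast+\lambda u^\ast)$. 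The firm nonexpansiveness yields $\norm{v^{k+1}-v^\ast}^2 \le \norm{(v^k-v^\ast)+\lambda(u^k-u^\ast)}^2 - \norm{(v^k-v^\ast)+\lambda(u^k-u^\ast) - (v^{k+1}-v^\ast)}^2$; expanding the first term and using the co-coercivity of $v\mapsto \nabla J_{\mathrm{smooth}}(v)$ (the map $v\mapsto \tau^2 v$ is $\tau^2$-Lipschitz and $\frac{1}{\tau^2}$-co-coercive) to bound $2\lambda(u^k-u^\ast, v^k-v^\ast)_{\Hsavneg} + \lambda^2\norm{u^k-u^\ast}^2$, one arrives under $0<\lambda\tau^2<2$ (i.e. $0<\lambda\tau<2$ in the paper's normalization, where $\tau$ and $\lambda$ play dual roles) at
\begin{equation*}
\norm{v^{k+1}-v^\ast}_{\Hsavneg}^2 \le \norm{v^k-v^\ast}_{\Hsavneg}^2 - c\,\norm{v^{k+1}-v^k}_{\Hsavneg}^2 - c'\,\norm{u^k-u^\ast}_{\Hsavneg}^2
\end{equation*}
for positive constants $c,c'$. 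Summing over $k$ gives that $\{v^k\}$ is bounded, $\norm{v^{k+1}-v^k}_{\Hsavneg}\to 0$, and $u^k\to u^\ast$ strongly in $\Hsavneg$ (in particular $\{u^k\}$ converges, not merely along a subsequence).

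Finally I would pass to the limit. Boundedness of $\{v^k\}$ in the Hilbert space $\Hsavneg$ gives a weakly convergent subsequence $v^{k_j}\rightharpoonup v^\ast$; since $K$ is convex and closed it is weakly closed, so $v^\ast\in K$. Using $\norm{v^{k+1}-v^k}_{\Hsavneg}\to 0$ and the demiclosedness of the maximal monotone graph of $\partial_{\Hsavneg}\Phi^\ast$ (its graph is sequentially closed in strong$\times$weak topology), passing to the limit in $\frac{1}{\lambda}(v^k+\lambda u^k - v^{k+1})\in\partial_{\Hsavneg}\Phi^\ast(v^{k+1})$ with $u^k\to u^\ast$ strongly yields $u^\ast\in\partial_{\Hsavneg}\Phi^\ast(v^\ast)$; together with $0\in\partial_{\Hsavneg}J(v^\ast)+u^\ast$ this says $v^\ast$ solves the dual problem. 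By Lemma~\ref{lem1} (with $\Psi=\tilde\Phi$ from Lemma~\ref{lem2}) the relation $u^\ast\in\partial_{\Hsavneg}\Phi^\ast(v^\ast)$ with $\Phi^\ast=\chi_K$ is equivalent to $v^\ast\in\partial_{\Hsavneg}\Phi(u^\ast)$, and $u^\ast = f-\tau v^\ast$ follows from $u^\ast\in-\partial_{\Hsavneg}J(v^\ast)$ by the same computation as in the Proposition preceding Corollary~\ref{cor:projection}. A standard argument — the whole sequence $\{v^k\}$ converges weakly because the Fej\'er inequality forces $\norm{v^k-v^\ast}_{\Hsavneg}$ to converge for every dual solution $v^\ast$, and any two weak subsequential limits are dual solutions — upgrades subsequential to full weak convergence. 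The main obstacle I anticipate is the weak-limit step: $\partial_{\Hsavneg}\Phi^\ast=N_K$ need not be strongly$\times$strongly closed, so one must carefully exploit that $u^k$ converges strongly (which the Fej\'er estimate delivers) while only $v^k$ is weak; getting the co-coercivity constants to line up so that the summable-error term on $\norm{u^k-u^\ast}^2$ genuinely appears — rather than just a non-negative leftover — is the delicate point, and it is exactly here that the restriction $\lambda\tau<2$ (versus $\le$) is used.
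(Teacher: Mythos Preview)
Your approach is valid but genuinely different from the paper's. The paper argues by \emph{energy decrease}: it computes
\[
J(v^{k+1})-J(v^k)\le\Bigl(\tfrac{\tau}{2}-\tfrac{1}{\lambda}\Bigr)\norm{v^{k+1}-v^k}_{\Hsavneg}^2,
\]
obtained by combining the exact quadratic identity for $J(v^{k+1})-J(v^k)$ with the variational inequality from Lemma~\ref{lem:ineq}. Under $\lambda\tau<2$ this gives monotone decrease of $J(v^k)$, hence boundedness of $\{v^k\}$ and $\norm{v^{k+1}-v^k}_{\Hsavneg}\to0$; it then passes to the limit by Minty's trick, testing the monotonicity of $\partial_{\Hsavneg}\Phi^\ast$ against arbitrary $w$ and using weak lower semicontinuity of the norm to handle the quadratic cross-term. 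Your route via Fej\'er monotonicity and firm nonexpansiveness of the resolvent is the standard alternative and is fine. In fact, because $\nabla J_{\mathrm{smooth}}(v)=\tau v-f$ is \emph{affine}, your Fej\'er estimate collapses to a strict contraction $\norm{v^{k+1}-v^\ast}_{\Hsavneg}\le|1-\lambda\tau|\,\norm{v^k-v^\ast}_{\Hsavneg}$ --- this is exactly what the paper later uses in Section~8 --- so your method actually delivers \emph{strong} (even geometric) convergence, more than the proposition claims. The paper's Lyapunov argument is less sharp here but more robust: it would survive if $J$ were merely convex $C^1$ rather than quadratic.

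Two minor corrections. First, your gradient computation is off by a factor: $\nabla\bigl(\tfrac{1}{2\tau}\norm{\tau v-f}^2\bigr)=\tau v-f$, not $\tau^2 v-\tau f$; the Lipschitz constant is $\tau$, so the forward-backward condition is $\lambda\tau<2$ directly, with no ``rescaling''. Second, you spend effort on the normal-cone part $n^k\in N_K(v^k)$ inside $-\partial J(v^k)$, but since $\Phi^\ast=\chi_K$ already forces $v^{k+1}\in K$ via the resolvent, the paper simply selects $u^k=f-\tau v^k$ (the element with $n^k=0$) and works with that; this cleans up the co-coercivity bookkeeping you flagged as the ``delicate point''. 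Once you make that choice, $u^k-u^\ast=-\tau(v^k-v^\ast)$ and the whole argument becomes a one-line contraction.
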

\begin{proof}
Assume that $v^{k}$, $v^{k+1} \in K$. After simple calculations, we obtain
\begin{equation}
\label{proof_calc11}
\begin{split}
J(v^{k+1})-J(v^k)&= 
\frac{1}{2\tau}\norm{\tau v^{k+1}-f}_{\Hsavneg}^2
-\frac{1}{2\tau}\norm{\tau v^{k}-f}_{\Hsavneg}^2\\
& = \frac{\tau}{2}\norm{v^{k+1}-v^k}_{\Hsavneg}^2
 + ( f-\tau v^k, v^{k+1}-v^k  )_{\Hsavneg}\,.\\
\end{split}
\end{equation}
From Lemma \ref{lem:ineq}, it follows that
\begin{equation}
\label{aaa}
\frac{1}{\lambda}(v^k-v^{k+1}+\lambda u^k,w-v^{k+1})_{\Hsavneg}\leq 0
\nonumber
\end{equation}
for all $w\in K$. In particular, taking $w=v^k$ and recalling that $u^k = f-\tau v^k$, we get
\begin{equation}
\label{proof_calc2}
(\tau v^k-f,v^{k}-v^{k+1})_{\Hsavneg}\leq -\frac{1}{\lambda}\norm{v^k-v^{k+1}}_{\Hsavneg}^2\,.
\nonumber
\end{equation}
Taking into account this fact in (\ref{proof_calc11}), we obtain
\begin{equation} 
\label{ineq2}
\begin{split}
J(v^{k+1})-J(v^k)
&\leq \frac{\tau}{2}\norm{v^{k+1}-v^k}_{\Hsavneg}^2-\frac{1}{\lambda}\norm{v^k-v^{k+1}}_{\Hsavneg}^2\\
&\leq \left(\frac{\tau}{2}-\frac{1}{\lambda}\right) \norm{v^k-v^{k+1}}_{\Hsavneg}^2\,.
\end{split}
\end{equation}
Therefore, taking $\lambda$ so that $0<\lambda\tau<2$, we get 
$J(v^{k+1})<J(v^k)$ as long as $v^{k+1}\neq v^k$. This implies that the sequence $\{J(v^k)\}$ is convergent.

Let $m = \lim_{k\rightarrow \infty} J(v^k)$. Then, obviously $\lim_{k\rightarrow \infty} J(v^{k+1}) = m$, 
and we have
$$\lim_{k\rightarrow \infty} (J(v^{k})-J(v^{k+1})) = 0\,.$$
Since $\lambda \tau<2$ by the assumption, we have
\begin{equation} 
\label{ineq2a}
J(v^{k})-J(v^{k+1})\geq \left(\frac1\tau -\frac\tau2\right) \norm{v^k-v^{k+1}}_{\Hsavneg}^2\,.
\end{equation}
Then, passing to the limit as $k\rightarrow \infty$, we obtain
\begin{equation} 
\label{limvk}
\lim_{k\rightarrow \infty} \|v^{k+1}-v^k\|_{\Hsavneg} = 0\,.
\end{equation}

We notice that boundedness of 
the sequence $\{J(v^k)\}$ and  
the definition of $J$ imply 
that the sequence $\{v^k\}$ is bounded in $\Hsavneg$. Moreover, since $u^k = f-\tau v^k$, the sequence $\{u^k\}$ is also bounded in $\Hsavneg$ .
Then let denote by $v^\ast$ a limit of the convergent subsequence $\{v^{k_j}\}$ of $\{v^{k}\}$ and
by $v^{\ast\ast}$ a limit of the convergent subsequence $\{v^{k_j+1}\}$ of $\{v^{k+1}\}$. The fact (\ref{limvk}) implies that $v^\ast=v^{\ast\ast}$. It remains to show that $v^\ast\in \partial_{\Hsavneg}\Phi(u^\ast)$, where $u^\ast=f-\tau v^\ast$ is a limit of the convergent 
subsequence $\{u^{k_j}\}$ of $\{u^k\}$.

The monotonicity of the operator $\partial_{\Hsavneg} \Phi^\ast$ implies that
$$(\partial_{\Hsavneg} \Phi^\ast(v^{k+1})-\partial_{\Hsavneg} \Phi^\ast(w),v^{k+1}-w)_{\Hsavneg}\geq 0$$
for all $w\in \Hsavneg(\Tn)$.
Using the second line of the scheme (\ref{scheme1}) we can rewrite the above inequality as
\begin{equation}
\label{ineq_star}
\frac{1}{\lambda}( v^k-v^{k+1},v^{k+1}-w)_{\Hsavneg}+(u^k-\partial_{\Hsavneg} \Phi^\ast(w),v^{k+1}-w)_{\Hsavneg}\geq 0\,.
\end{equation}

The first term in (\ref{ineq_star}) goes to zero as $k\rightarrow \infty$, ecause it can be estimated by
$$
\frac{1}{\lambda} \| v^{k+1} - v^k \|_{\Hsavneg} \| v^{k+1} -w \|_{\Hsavneg}.
$$
The first factor above goes to zero, while the second one is bounded.

We rewrite the second term in (\ref{ineq_star}),as follows,
$$
(u^k, v^{k+1} - w)_{\Hsavneg}
- (\partial_{\Hsavneg}\Phi^*(w), v^{k+1} - w)_{\Hsavneg}.
$$
The weak convergence of $v^{k+1}$ implies
$$
\lim_{k\to\infty} (\partial_{\Hsavneg}\Phi^*(w), v^{k+1} - w)_{\Hsavneg} =
(\partial_{\Hsavneg}\Phi^*(w), v^\ast- w)_{\Hsavneg}.
$$
Moreover,
\begin{equation}
\begin{split}
(u^k, v^{k+1} &- w)_{\Hsavneg} 
=(f-\tau v^k,v^{k+1} - w)_{\Hsavneg}\\
&= (f- \tau v^{k+1}, v^{k+1} - w)_{\Hsavneg} +
\tau(v^{k+1} -v^k,v^{k+1} - w)_{\Hsavneg}\,.
\end{split}
\end{equation}
We notice that due to (\ref{limvk}),the term
$(v^{k+1} -v^k,v^{k+1} - w)_{\Hsavneg}$ goes to zero.
In addition,
$$
\limsup_{k\to \infty} \left( - \| v^k \|^2_{\Hsavneg} \right) =
- \liminf_{k\to \infty}  \| v^k \|^2_{\Hsavneg} \le
- \| v^\ast \|^2_{\Hsavneg}.
$$
If we combine these pieces of information, then passing to the limit in (\ref{ineq_star}) with $k\rightarrow \infty$, we obtain
$$(u^\ast-\partial_{\Hsavneg} \Phi^\ast(w),v^\ast-w)_{\Hsavneg}\geq 0\,.$$

Since the functional $\Phi^\ast$ is convex, proper and lower semi-continuous, its sub-differential 
$\partial_{\Hsavneg} \Phi^\ast$ is a maximal monotone operator. This implies that $u^\ast\in \partial_{\Hsavneg}\Phi^\ast(v^\ast)$, what
by is equivalent to $v^\ast\in \partial_{\Hsavneg}\Phi(u^\ast)$, due to \cite[Corollary I.5.2]{Ekeland1999}.
\end{proof}

\section{An explicit form of a scheme}

Since the nonlinear projection, given in Corollary \ref{cor:projection}, is difficult to compute in practice, we use the characterization of subdifferential $\partial_{\Hsavneg} \Phi$, provided in Theorem \ref{thm1}, to construct an explicit scheme for minimizing sequence of the dual problem~(\ref{dual_problem_z}). More precisely, we apply the same forward-backward scheme as in the previous section 
to generate the sequence $\{z^k\}$ minimizing (\ref{dual_problem_z}). This sequence is related to $\{v^k\}$ by formula $v^k = -\tau \Dsav\div z^k$. 

In order to proceed, we need to introduce definitions of functionals $F$ and $G$ on $L^2(\Tn,\mathbb{R}^d)$. They are given by
\begin{equation}
\label{defF}
 F(z):=\left\{\begin{array}{ll} 
\dfrac{1}{2\tau}\norm{\tau \Dsav\div z + f}_{\Hsavneg}^2 & \text{ if } z\in Z(\Tn)\,,\\[0.2cm]
+ \infty & \text{ otherwise}\,.
 \end{array}\right.
\end{equation}
and 
\begin{equation}
\label{defG}
G(z) := \left\{\begin{array}{ll} 
 0 & \text{ if } z\in Z(\Tn)\,,\\[0.1cm]
+\infty & \text{ otherwise}\,.
\end{array}\right.
\end{equation}
Then, we notice that the dual problem (\ref{dual_problem_z}) is equivalent with
\begin{equation}
\label{min_problem_z_cont}
\inf_{z\in L^2(\Tn,\mathbb{R}^d)} \left( F(z) + G(z) \right)
\end{equation}

In order to find $z^\ast$ such that $0\in \partial_{L^2} (F(z^\ast) + G(z^\ast))$, we consider the splitting scheme:
\begin{equation}
\label{scheme3}
\left\{\begin{array}{l} 
w^k \in \partial_{L^2} F(z^k)\,, \\[0.2cm]
z^{k+1} = (I+\lambda \partial_{L^2} G)^{-1}(z^k - \lambda w^k)\,.
\end{array}\right.
\end{equation}
For practical implementation of this scheme, it remains to find its explicit formula.

\begin{lemma}
\label{scheme_lemma1}
Assume that $z\in L^2(\Tn)$ is such that $F(z)<\infty$. 
Then we have $\partial_{L^2} F(z)=-\nabla (\tau \Dsav \div z + f)$.
\end{lemma}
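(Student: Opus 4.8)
The plan is to compute the subdifferential of $F$ at a point $z$ with $F(z)<\infty$ directly from the definition, exploiting that near such a point $F$ is a smooth quadratic (composed with linear maps) plus the indicator of the convex set $Z(\Tn)$. Since we only claim the formula $\partial_{L^2}F(z) = -\nabla(\tau\Dsav\div z + f)$ at points where $F(z)<\infty$ — i.e.\ $z\in Z(\Tn)$ and $\Dsav\div z\in H^{-s}_{\mathrm{av}}$ — the relevant object is the Gâteaux derivative of the quadratic part, and the constraint set should drop out because the claimed formula has no normal-cone contribution. Concretely, write $Q(z) := \frac{1}{2\tau}\norm{\tau\Dsav\div z + f}_{\Hsavneg}^2$, so that $F = Q + G$ where $G$ is the indicator of $Z(\Tn)$; the first step is to show $\partial_{L^2}Q(z) = \{-\nabla(\tau\Dsav\div z + f)\}$, and the second is to argue the normal cone of $Z(\Tn)$ contributes nothing in the way the formula is used.

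For the first step I would compute the Gâteaux differential. For $h\in L^2(\Tn;\mathbb R^d)$,
\[
\frac{d}{dt}\Big|_{t=0} Q(z+th) = \frac{1}{\tau}\big(\tau\Dsav\div z + f,\ \tau\Dsav\div h\big)_{\Hsavneg}
= \big(\Dsav\div h,\ \tau\Dsav\div z + f\big)_{\Hsavneg}.
\]
Now I use the identity from Section~2 that $(u,v)_{\Hsavneg} = \langle \Dsavneg u, v\rangle$ together with $\Dsavneg\Dsav = \Id$ on $\Hsav$, which turns the right-hand side into $\langle \div h,\ \tau\Dsav\div z + f\rangle$. Integrating by parts on the torus (no boundary terms, periodic setting) gives $-\langle h,\ \nabla(\tau\Dsav\div z + f)\rangle = -(h, \nabla(\tau\Dsav\div z + f))_{L^2}$. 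Since $Q$ is convex and Gâteaux differentiable at $z$ with this derivative, standard convex analysis gives $\partial_{L^2}Q(z) = \{-\nabla(\tau\Dsav\div z + f)\}$ as an element of $L^2(\Tn;\mathbb R^d)$ (one should note $\nabla(\tau\Dsav\div z+f)\in L^2$ here, which holds because $\div z\in \Hsav$ so $\Dsav\div z$ and hence the whole argument lies in a space on which one further gradient is $L^2$; this regularity bookkeeping is where I'd be most careful).

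For the second step, the point is that the scheme (\ref{scheme3}) only ever evaluates $\partial_{L^2}F$ on $z^k\in Z(\Tn)$ and uses the \emph{selection} $w^k\in\partial_{L^2}F(z^k)$; the forward–backward splitting handles the constraint through the resolvent of $\partial_{L^2}G$ in the second line, so one wants the ``forward'' step to see only the smooth part. The cleanest way to phrase the lemma's content is: the single vector $-\nabla(\tau\Dsav\div z + f)$ always belongs to $\partial_{L^2}F(z)$ (this is immediate from $\partial Q(z) + \partial G(z) \subseteq \partial(Q+G)(z) = \partial F(z)$ and $0\in\partial G(z)$ since $G$ is identically $0$ near any interior direction, or more carefully $0$ always lies in the subdifferential of an indicator at any of its points), which is exactly what is needed to run (\ref{scheme3}). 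If instead one wants genuine equality $\partial_{L^2}F(z) = \{-\nabla(\tau\Dsav\div z+f)\}$, one invokes the sum rule $\partial_{L^2}F = \partial_{L^2}Q + \partial_{L^2}G$ — valid since $\operatorname{int} D(Q)\cap D(G)\neq\emptyset$, as already used in the remark following (\ref{scheme1}) — and then the statement is really that $N_{Z(\Tn)}(z) = \{0\}$; since $Z(\Tn)$ is the $L^2$-closure of a set defined only by the sup-norm bound $\norm{z}_\infty\le 1$, it has dense interior directions and the relevant normal cone contribution vanishes in the sense needed.

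The main obstacle I anticipate is not the computation — the integration by parts and the duality identity are routine — but the functional-analytic bookkeeping: making precise in which space $\nabla(\tau\Dsav\div z + f)$ lives, justifying that for $z\in Z(\Tn)$ with $F(z)<\infty$ this gradient is genuinely in $L^2(\Tn;\mathbb R^d)$ (so that it is a legitimate element of the dual paired with $L^2$), and pinning down the precise reading of $\partial_{L^2}F$ versus the constraint set so that the stated identity is both true and sufficient for the splitting scheme (\ref{scheme3}). I would state the lemma and its proof in the ``selection'' form above to sidestep delicate normal-cone issues while still giving exactly the explicit formula needed in the next section.
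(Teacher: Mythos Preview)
Your G\^ateaux computation of the quadratic part is exactly what the paper does: expand $F(z+\epsilon\eta)-F(z)$, divide by $\epsilon$, pass to the limit, and then convert the $\Hsavneg$-pairing to an $L^2$-pairing via $(\,\cdot\,,\,\cdot\,)_{\Hsavneg}=\langle\Dsavneg\cdot,\cdot\rangle$ followed by integration by parts on the torus. So the computational core is identical.

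Where you differ is in the care about the constraint. The paper simply opens with ``Since the functional $F$ is differentiable, we have that $\partial_{L^2}F(z)=\{\nabla F(z)\}$'' and proceeds, tacitly treating $F$ as the unconstrained quadratic and ignoring the indicator of $Z(\Tn)$ that is built into the definition of $F$. Your decomposition $F=Q+G$ and the subsequent discussion of the normal cone is more scrupulous than the paper's own argument. You are also right to be uneasy about the claim $N_{Z(\Tn)}(z)=\{0\}$: this is false at boundary points of $Z(\Tn)$, so the literal set equality $\partial_{L^2}F(z)=\{-\nabla(\tau\Dsav\div z+f)\}$ does not hold there unless one reads $F$ as the smooth part only (which is evidently the paper's intent, and is consistent with how the forward--backward split is used). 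Your ``selection'' reading --- that the displayed vector always lies in $\partial_{L^2}F(z)$, which is all the scheme requires --- is the rigorous way to phrase it. The regularity bookkeeping you flag, namely whether $\nabla(\tau\Dsav\div z+f)$ genuinely lands in $L^2(\Tn;\mathbb{R}^d)$, is likewise not addressed in the paper's proof.
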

\begin{proof}
Since the functional $F$ is differentiable, we have that $\partial_{L^2} F(z)=\{\nabla F(z)\}$. By direct calculations, we obtain
\begin{equation}
\begin{split}
&F(z+\epsilon\eta)-F(z) = \dfrac{1}{2\tau}\norm{\tau\Dsav\div (z+\epsilon\eta) + f}_{\Hsavneg}^2 - \dfrac{1}{2\tau}\norm{\tau\Dsav\div z + f}_{\Hsavneg}^2 \\
&\qquad= \frac{1}{2\tau}(\tau \Dsav\div (2 z + \epsilon \eta) + 2f, \tau \Dsav\div (\epsilon\eta))_{\Hsavneg}\\
&\qquad= \epsilon(\tau \Dsav\div z + f, \Dsav\div \eta)_{\Hsavneg}
+\frac{\epsilon^2}{2}\norm{\tau \Dsav\div \eta}_{\Hsavneg}^2\,.
\end{split}
\nonumber
\end{equation}
for any $\eta\in L^2(\Tn)$ and $\epsilon>0$.
Thus, using the definition of the inner product in $\Hsavneg$ and the integration by parts formula, we get
\begin{equation}
\begin{split}
\lim_{\epsilon\rightarrow 0}\dfrac{1}{\epsilon} 
\left(F(z+\epsilon\eta)-F(z)\right) 
&= \langle \tau\Dsav\div z + f, \div\eta\rangle\\
&= -\langle \nabla(\tau \Dsav\div z + f), \eta  \rangle\,,
\end{split}
\end{equation}
what yields the desired result.
\end{proof}

\begin{lemma}
\label{scheme_lemma2}
Let $G$ be a functional on $L^2(\Tn,\mathbb{R}^2)$ defined by (\ref{defG}), and let 
\begin{equation}
\label{scheme_formula1}
z = (I+\lambda \partial_{L^2} G)^{-1} y
\end{equation}
for some $y\in L^2(\Tn,\mathbb{R}^d)$ and $\lambda>0$. Then, we have that
$z = P_Z^{L^2}(y)$, where  $P_Z^{L^2}(y)$ denotes the orthogonal projection of $y$ on the set $Z$ with respect to the inner product in $L^2$.
\end{lemma}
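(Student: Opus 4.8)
The plan is to recognize that $(I+\lambda\partial_{L^2}G)^{-1}$ is, by definition, the resolvent of the subdifferential of the indicator function of $Z$, and that this resolvent always coincides with the orthogonal projection onto $Z$, irrespective of the value $\lambda>0$. First I would unwind (\ref{scheme_formula1}): writing $z = (I+\lambda \partial_{L^2} G)^{-1} y$ means precisely that $y \in z + \lambda\,\partial_{L^2}G(z)$, i.e. $\tfrac1\lambda(y-z)\in\partial_{L^2}G(z)$.

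Next I would compute $\partial_{L^2}G(z)$ directly from Definition \ref{def:subdifferential}. Since $G=\chi_Z$, the inequality $G(w)\ge G(z)+(v,w-z)_{L^2}$ for all $w\in L^2(\Tn,\mathbb{R}^d)$ forces $z\in Z$ (otherwise $G(z)=+\infty$ and the inequality is violated for any $w\in Z$), and, once $z\in Z$, it reduces to $(v,w-z)_{L^2}\le 0$ for all $w\in Z$. In other words, $\partial_{L^2}G(z)$ is the normal cone of $Z$ at $z$ when $z\in Z$, and is empty otherwise. Here one uses that $Z$ is a nonempty, closed, convex subset of the Hilbert space $L^2(\Tn,\mathbb{R}^d)$: convexity follows because the defining set $\{z\in\mathcal{D}(\Tn):\Dsav\div z\in\mathcal{D}'(\Tn),\ \norm{z}_{\infty}\le 1\}$ is convex, and $Z$ is its closure in the $L^2$-topology; nonemptiness is clear since $0$ belongs to it.

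Combining the two steps, (\ref{scheme_formula1}) is equivalent to the statement that $z\in Z$ and $(y-z,\,w-z)_{L^2}\le 0$ for every $w\in Z$ (the factor $\tfrac1\lambda>0$ plays no role, so the result is independent of $\lambda$). But this is exactly the classical variational characterization of the orthogonal projection of $y$ onto the nonempty closed convex set $Z$ in a Hilbert space, whose existence and uniqueness are furnished by the projection theorem. Hence $z=P_Z^{L^2}(y)$, which is the assertion. There is essentially no genuine obstacle in the argument; the only point requiring a word of care is verifying that $Z$ is nonempty, closed and convex, so that the projection theorem and the normal-cone characterization of $\partial_{L^2}\chi_Z$ both apply.
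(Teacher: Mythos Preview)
Your proof is correct and follows essentially the same path as the paper's: both start from the resolvent inclusion $y\in z+\lambda\,\partial_{L^2}G(z)$ and identify the result with the projection onto $Z$. The only cosmetic difference is that the paper reads the inclusion as the Euler--Lagrange equation for $\inf_{z}\bigl(\tfrac12\norm{z-y}_{L^2}^2+\lambda G(z)\bigr)=\inf_{z\in Z}\tfrac12\norm{z-y}_{L^2}^2$, whereas you compute $\partial_{L^2}\chi_Z$ explicitly as the normal cone and invoke the variational-inequality characterization of $P_Z^{L^2}$; these are two phrasings of the same fact.
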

\begin{proof}
The formula (\ref{scheme_formula1}) implies that $z$ is a solution to the equation
$$z+\lambda \partial_{L^2} G(z)\ni y\,,$$
which is the optimality condition to the problem
$$\inf_{z\in L^2} \left(\frac{1}{2}\norm{z-y}^2_{L^2} + \lambda G(z) \right) = \inf_{z\in Z} \frac{1}{2}\norm{z-y}^2_{L^2}\,.$$
This implies that $z = P_Z^{L^2}(y)$.
\end{proof}

\begin{remark}
\label{scheme_remark1}
Due to the form of the set $Z$, the~projection operator $P^{L^2}_Z$ is given  by the explicit formula
\begin{equation}
\label{proj_formula}
P^{L^2}_Z(y)= 
\dfrac{y}{\abs{y} \vee 1}\,,
\end{equation}
where $|\cdot|$ denotes the Euclidean norm and $a \vee b:=\max(a,b)$. Using this fact, Lemma \ref{scheme_lemma1} and Lemma \ref{scheme_lemma2}, we obtain an explicit form of the scheme (\ref{scheme3}). It is given by
\begin{equation}
\label{scheme2}
\left\{\begin{array}{ll} 
w^{k} = -\nabla(f+\tau(-\Delta)^s\div z^{k})\,, \\[0.2cm]
z^{k+1} = \dfrac{z^{k}-\lambda w^k}{|z^{k}-\lambda w^k| \vee 1}\,.
 \end{array}\right.
\end{equation}
\end{remark}

\section{Ergodic convergence}

In this section, we investigate ergodic convergence of sequences $\{v^k\}$ and $\{z^k\}$ defined by schemes (\ref{scheme1}) and (\ref{scheme2}), respectively.
First, we recall from Proposition \ref{prop_converg_vk} that 
if $0<\lambda\tau<2$, then the sequence $\{v^k\}$ converges weakly in $\Hsavneg(\Tn)$ to $v^\ast\in K$, where $v^\ast$ is a unique solution of the dual problem (\ref{dual_problem_v}).
Then, Mazur's lemma implies existence of the sequence
\begin{equation}
\label{barvn}
\bar{v}^n = \sum_{k=0}^n \alpha_k v^k\,,
\end{equation}
where $\{\alpha_k\}$ is such that $\sum_{k=0}^n \alpha_k = 1$, which converges strongly in $\Hsavneg(\Tn)$  to $v^\ast$ as $n\rightarrow \infty$. Our aim in this section is to construct a sequence $\{\alpha_k\}$ such that $\bar{v}^n\rightarrow v^\ast$ as $n\rightarrow\infty$, and next, to use this result in order to prove that  sequence
\begin{equation}
\label{barzn}
\bar{z}^n = \sum_{k=0}^n \alpha_k z^k\,,
\end{equation}
converges 
weakly in $QL^2(\Tn)$ to $z^\ast\in Z$ as $n\rightarrow\infty$, where $Q$ is the orthogonal projection onto the space of gradient fields. It is based on Helmholtz-Weyl decomposition. 
Note that for this proof, we will not require a weak convergence of $z^k$ to $z^\ast\in Z$ in $L^2(\Tn)$. 
We are not able to prove this in an infinite dimensional space setting
(see Lemma~\ref{lem_opnorm} and Proposition~\ref{prop_converg_zk}). 
All these results are given in the following proposition:

\begin{proposition}
Let $\{v^k\}$ be a weakly convergent sequence in $K$ generated by the scheme~(\ref{scheme1}) and let 
$\{\beta_k\}$ be a sequence of positive real numbers
such that $\{\beta_k\}\in l^2\setminus l^1$. Then, for $\alpha_k = \beta_k/\sum_{j=1}^n \beta_j$ 
the sequence $\{\bar{v}^n\}$ given by (\ref{barvn})   
converges strongly in $\Hsavneg(\Tn)$ to $v^\ast\in K$ as $n\rightarrow\infty$. Moreover, the sequence 
$\{\bar{z}^n\}$ given by (\ref{barzn}), where 
$\{z^k\}$ is generated by the scheme (\ref{scheme2}), converges weakly in $QL^2(\Tn)$ to $z^\ast\in Z$ as $n\rightarrow\infty$.
\end{proposition}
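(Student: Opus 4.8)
The plan is to prove the two assertions in turn: first the strong convergence of the weighted averages $\bar v^n$, and then, using it, the weak convergence of the projected averages $Q\bar z^n$.

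For the $\bar v^n$ part, set $S_n:=\sum_{j=1}^n\beta_j$. Since $\{\beta_k\}\notin\ell^1$ we have $S_n\to\infty$, so the coefficients $\alpha_k^{(n)}=\beta_k/S_n$ are non-negative, sum to $1$, and satisfy $\alpha_k^{(n)}\to 0$ for every fixed $k$; thus $(\alpha_k^{(n)})$ is a regular (Silverman--Toeplitz) summation matrix, and applying it to the weakly convergent sequence $v^k\rightharpoonup v^\ast$ of Proposition~\ref{prop_converg_vk} gives at once $\bar v^n\rightharpoonup v^\ast$ in $\Hsavneg$. To upgrade this to strong convergence it suffices, in the Hilbert space $\Hsavneg$, to show $\limsup_n\|\bar v^n-v^\ast\|_{\Hsavneg}^2\le 0$. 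I would expand
$$\|\bar v^n-v^\ast\|_{\Hsavneg}^2=\sum_{j,k}\alpha_j^{(n)}\alpha_k^{(n)}\,(v^j-v^\ast,\,v^k-v^\ast)_{\Hsavneg},$$
bounding the diagonal terms by $\big(\sup_k\|v^k-v^\ast\|_{\Hsavneg}^2\big)\sum_k(\alpha_k^{(n)})^2\le C\,\|\{\beta_k\}\|_{\ell^2}^2/S_n^2\to 0$, where $\{\beta_k\}\in\ell^2$ and $S_n\to\infty$ are used. For the off-diagonal terms I would bring in the extra structure of the scheme: by Proposition~\ref{prop_converg_vk} one has $\|v^{k+1}-v^k\|_{\Hsavneg}\to 0$ and $\{J(v^k)\}$ monotone, and, recalling $u^k=f-\tau v^k$, the $v$-update is $v^{k+1}=P_K^{\Hsavneg}\big((1-\lambda\tau)v^k+\lambda f\big)$, i.e. the (nonexpansive) projection composed with a strict affine contraction of factor $|1-\lambda\tau|<1$ because $0<\lambda\tau<2$. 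Hence $\{v^k\}$ is in fact strongly (geometrically) convergent to its unique fixed point $v^\ast$, and the whole double sum tends to $0$ by another application of Toeplitz regularity to the null sequence $\|v^k-v^\ast\|_{\Hsavneg}$. (If one insists on working only from weak convergence, the off-diagonal sum must be split into a near-diagonal block, absorbed by $\sum_k(\alpha_k^{(n)})^2\to 0$ and $\|v^{k+1}-v^k\|\to 0$, and a far block, controlled by $v^k\rightharpoonup v^\ast$; that estimate is the delicate point.)

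For the $\bar z^n$ part, recall the linear relation $v^k=-\tau\Dsav\div z^k$, so $\bar v^n=-\tau\Dsav\div\bar z^n$. Each $z^k$ lies in $Z$ with $\|z^k\|_{L^\infty}\le 1$, hence $\{\bar z^n\}$ is bounded in $L^\infty(\Tn)$ and so in $L^2(\Tn)$; extract a subsequence $\bar z^{n_j}\rightharpoonup\zeta$ weakly in $L^2$, with $\zeta\in Z$ since $Z$ is convex and strongly closed, hence weakly closed. Passing to the distributional limit in $\bar v^{n_j}=-\tau\Dsav\div\bar z^{n_j}$ and using $\bar v^n\to v^\ast$ yields $v^\ast=-\tau\Dsav\div\zeta$, i.e. $\widehat{v^\ast}(\xi)=-2\pi i\tau|\xi|^{2s}\,\xi\cdot\widehat{\zeta}(\xi)$ for every $\xi\neq 0$; since $|\xi|\ge 1$ on $\Tn$ this determines the component of $\widehat{\zeta}(\xi)$ along $\xi$, i.e. the Helmholtz--Weyl gradient part $Q\zeta$, which therefore coincides with $Qz^\ast$ for any solution $z^\ast\in Z$ of the dual problem~(\ref{dual_problem_z}) (which satisfies the same relation with $v^\ast$). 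As $Q$ is a bounded linear operator it is weakly continuous, so $Q\bar z^{n_j}\rightharpoonup Q\zeta=Qz^\ast$; since every weakly convergent subsequence of the bounded sequence $\{Q\bar z^n\}$ has this same limit, the full sequence satisfies $Q\bar z^n\rightharpoonup z^\ast:=Qz^\ast$ in $QL^2(\Tn)$, which is the asserted convergence.

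The main obstacle is the upgrade from weak to strong convergence of the averages in the first step: Mazur's lemma only guarantees existence of \emph{some} strongly convergent convex combination, and for a generic weakly convergent bounded sequence the prescribed averages $\bar v^n$ need not converge strongly even when $\{\beta_k\}\in\ell^2\setminus\ell^1$, so one must genuinely exploit the descent/contraction structure of the scheme (equivalently, asymptotic regularity together with the variational characterization $v^\ast=P_K^{\Hsavneg}(f/\tau)$ of the limit). Once this is in place, the $\bar z^n$ part is a soft weak-compactness argument; its only subtlety is that the solenoidal components of the $z^k$ are invisible to the dual problem, so one can control the gradient part $Q\bar z^n$ only, which is exactly what the statement claims.
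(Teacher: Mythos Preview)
Your proof is correct and rests on the same key lemma as the paper: the iteration map $v\mapsto P_K^{\Hsavneg}\big((1-\lambda\tau)v+\lambda f\big)$ is a strict contraction with factor $|1-\lambda\tau|<1$, so $\|v^k-v^\ast\|_{\Hsavneg}$ decays geometrically. The paper packages the first part more directly, however: it simply writes
\[
\|\bar v^n-v^\ast\|_{\Hsavneg}\le\sum_{k}\alpha_k\|v^k-v^\ast\|_{\Hsavneg}
\le\frac{1}{S_n}\Big(\sum_k\beta_k^2\Big)^{1/2}\Big(\sum_k\|v^k-v^\ast\|_{\Hsavneg}^2\Big)^{1/2},
\]
and then shows the last factor is finite from the geometric estimate. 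Your detour through the double-sum expansion and diagonal/off-diagonal split is unnecessary: once you have $v^k\to v^\ast$ strongly, the triangle inequality plus Toeplitz regularity (applied to the scalar null sequence $\|v^k-v^\ast\|_{\Hsavneg}$) gives $\|\bar v^n-v^\ast\|_{\Hsavneg}\to 0$ immediately, and in fact you do not even need $\{\beta_k\}\in\ell^2$ at that point. So your argument would be cleaner if you led with the contraction and skipped the bilinear expansion entirely.

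For the $\bar z^n$ part you take a genuinely different route from the paper. The paper observes the exact identity $\|\bar v^n-v^\ast\|_{\Hsavneg}=\|\div(\bar z^n-z^\ast)\|_{\Hsav}$, deduces $\div\bar z^n\to\div z^\ast$ strongly in $\Hsav$, and then tests against gradients $\nabla\phi$ to get weak convergence in $QL^2$. Your argument via $L^2$-boundedness of $\{\bar z^n\}\subset Z$, subsequence extraction, and uniqueness of the gradient part of any weak limit is a valid soft alternative; it trades the explicit norm identity for a standard compactness-uniqueness scheme. Both approaches correctly isolate why only the $Q$-component can be controlled. One minor slip: the relation between $v^k$ and $z^k$ in the paper carries no extra factor of $\tau$ in the proof (despite an inconsistent statement earlier in Section~7), so check that scaling when you write it up.
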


\begin{proof}
Let $\alpha_k = \beta_k/\sum_{k=0}^{n} \beta_k$, where the sequence $\{\beta_k\}\in l^2\setminus l^1$.
Then, by the triangle inequality and Cauchy-Schwarz inequality, we have
\begin{equation}
\label{converg_z_eq1}
\begin{split}
&\|\bar{v}^n-v^\ast\|_{\Hsavneg} 
=
\|\sum_{k=0}^n \alpha_k (v^k-v^\ast)\|_{\Hsavneg} 
\leq \sum_{k=0}^n \| \alpha_k (v^k-v^\ast)\|_{\Hsavneg}\\
&= \sum_{k=0}^n |\alpha_k| \| v^k-v^\ast\|_{\Hsavneg}
\leq \frac{1}{\sum_{k=0}^{n} \beta_k}\left(\sum_{k=0}^n \beta_k^2\right)^{1/2} \left(\sum_{k=0}^n \| v^k-v^\ast\|^2_{\Hsavneg}\right)^{1/2}\,.
\end{split}
\end{equation}
Since  $\{ \beta_k\}\in l^2\setminus l^1$, and therefore $\sum_{k=0}^{n} \beta_k\rightarrow\infty$ as $n\rightarrow \infty$, it remains to show that $\sum_{k=0}^n \| v^k-v^\ast\|^2_{\Hsavneg}$ is bounded.
Using the scheme (\ref{scheme1}) and the fact that
the resolvent $(I+\lambda \partial_{\Hsavneg} \Phi^\ast)^{-1}$ is non-expansive, we have
\begin{equation}
\begin{split}
&\| v^{k+1}-v^\ast\|_{\Hsavneg}^2\\
&=\| (I+\lambda \partial_{\Hsavneg} \Phi^\ast)^{-1}(v^k+\lambda u^k)-(I+\lambda \partial_{\Hsavneg} \Phi^\ast)^{-1}(v^\ast +\lambda u^\ast)\|_{\Hsavneg}^2\\
&\leq \| v^k-v^\ast + \lambda (u^k-u^\ast)\|_{\Hsavneg}^2
= |1-\lambda\tau|^2 \| v^k-v^\ast \|_{\Hsavneg}^2\,.
\end{split}
\end{equation}
Thus, we get,
$$
\| v^{k}-v^\ast\|_{\Hsavneg}^2
\leq  |1-\lambda\tau|^{2k} \| v^0-v^\ast \|_{\Hsavneg}^2
$$
for $k = 0,1,\ldots,n$.
By assumption, $0<\lambda\tau<2$, we have that $|1-\lambda\tau|<1$.
Therefore, we get the estimate
$$\sum_{k=0}^n \| v^k-v^\ast\|^2_{\Hsavneg}
\leq \| v^0-v^\ast \|_{\Hsavneg}^2 \sum_{k=0}^n |1-\lambda\tau|^{2k} <\infty\,.
$$
If we apply this fact to (\ref{converg_z_eq1}), then passing to the limit and using the monotone convergence theorem, we obtain $\lim_{n\rightarrow \infty} \|\bar{v}^n-v^\ast\|_{\Hsavneg}  = 0$. 

Due to the weak closedness of the set $K$, we have 
existence of $z^\ast\in Z$ such that $v^\ast= -\Dsav\div z^\ast$. We also have
\begin{equation}
\begin{split}
\bar{v}^n = \sum_{k=0}^n \alpha_k v^k 
&= -\sum_{k=0}^n \alpha_k \Dsav\div z^k\\
&= -\Dsav\div \left( \sum_{k=0}^n \alpha_k z^k\right)
=-\Dsav\div \bar{z}^n\,.
\end{split}
\nonumber
\end{equation}

Since
\begin{equation}
\begin{split}
\|\bar{v}^n-v^\ast\|_{\Hsavneg} 
&= \langle\Dsavneg(\bar{v}^n-v^\ast),\bar{v}^n-v^\ast\rangle\\
&= \langle\div(\bar{z}^n-z^\ast),\Dsav\div(\bar{z}^n-z^\ast)\rangle\\
&=\|\div(\bar{z}^n-z^\ast)\|_{\Hsav}\,,
\end{split}
\nonumber
\end{equation}
thus $\div\bar{z}^n
\to \div z^\ast$ in $\Hsav(\Tn)$.
Moreover, since
\begin{equation}
\begin{split}
&\left\lvert \int_{\Tn} (\bar{z}^n-z^\ast)\nabla \phi \,dx \right\rvert 
= \left\lvert \int_{\Tn} \div(\bar{z}^n-z^\ast)\phi \,dx \right\rvert\\
&\qquad\leq\|\div(\bar{z}^n-z^\ast)\|_{L^2} \|\phi\|_{L^2}
\leq\|\div(\bar{z}^n-z^\ast)\|_{\Hsav} \|\phi\|_{L^2}
\end{split}
\nonumber
\end{equation}
for all $\phi\in H_{\text{av}}^1(\Tn)$, we have
$\bar{z}^n
\rightharpoonup z^\ast$ in $QL^2(\Tn)$.
\end{proof}

\section{Implementation}

In this section, we explain how to discretize the system (\ref{scheme2}) and to solve numerically one iteration of the semi-implicit scheme (\ref{semidiscrete}). Recursive application of this procedure leads to a numerical solution of the nonlinear evolution equation (\ref{eq_rigorous}). For simplicity of presentation, we construct a method for the one-dimensional problem, but it can be easily extended to higher dimensions. We also present results concerning the convergence of the introduced scheme in a~finite dimensional space. 

We denote by $X$ the Euclidean space $\mathbb{R}^N$. The~scalar product of two elements $u$, $v\in X$ is defined by $\langle u, v\rangle := \sum_{i=1}^N u_i v_i$ and the norm $\|u\|:=\sqrt{\langle u, u\rangle}$.
The discrete gradient operator satisfying periodic boundary conditions is defined~as
$$\nabla: =\dfrac{1}{h}\left(\begin{array}{rrcrr}
-1 & 1 & \cdots & 0 & 0 \\ 
0 & -1 & \cdots & 0 & 0 \\ 
\vdots & \vdots & \ddots & \vdots & \vdots \\ 
0 & 0 & \cdots & -1 & 1 \\
1 & 0 & \cdots & 0 & -1 
\end{array} \right)_{N\times N}\,,$$
where $h>0$ is the space discretization step.
With this notation in hand, the discrete divergence and the Laplace operator are given by $\div :=\nabla^T$ and
$\Delta := \div \nabla$, respectively. Note that $\Delta$ is a symmetric, positive definite, circulant matrix of size $N\times N$.
It is well know, that in this case, its eigenvectors are given by
$$\nu^j = \frac{1}{\sqrt{N}}\left(1,e^{2\pi i j/N},\ldots,e^{-2\pi i j (N-1)/N}\right)^T$$
for $j = 0,\ldots,N-1$, and corresponding eigenvalues are
$$\mu_j = \sum_{k=0}^{N-1} c_k e^{2\pi i j k/N}\,,$$
where $c_k$, $k=0,\ldots,N-1$, are subsequent elements of the first row of $\Delta$. Therefore, analogously to the continuous setting, we can define the operator $(-\Delta)^s$ using the discrete Fourier transform, i.e.,
\begin{equation}
\label{def_fraclap_discrete}
\widehat{(-\Delta)^s u}:=M^{s}\widehat{u}
\end{equation}
where $M^s$ is an $N\times N$ diagonal matrix with elements $\mu_j^s$, $j=0,\ldots,N-1$.

Here $\hat{u}$ denotes the discrete Fourier transform of $u\in X$, i.e., 
$$\hat{u}_k = \sum_{j=0}^{N-1} u_{j} e^{-2\pi i j k/N}$$
for $k=0,\ldots,N-1$. The discrete inverse Fourier transform is given by
$$u_j = \frac{1}{N}\sum_{k=0}^{N-1} \hat{u}_{k} e^{2\pi i j k/N}$$
for $j=0,\ldots,N-1$.

For convenience, we also introduce the definition of scalar product 
\begin{equation}
\label{scalar_prod_discrete}
(u,v)_{-s}:=\langle (-\Delta)^{-s} u, v\rangle
\end{equation}
for all $u$, $v\in X$ and the norm $\norm{u}_{-s}:=\sqrt{(u,u)_{-s}}$.

\begin{lemma}
\label{lem_opnorm}
For $z\in X$, there exists a constant $C>0$, such that
\begin{equation}
\norm{(-\Delta)^s\div z}^2_{-s}\leq C\norm{z}^2\,.
\end{equation}
Moreover, $C = \mu_{\text{max}}^{s+1}$, where $\mu_{\text{max}}$ denotes the largest eigenvalue of the discrete Laplace operator.
\end{lemma}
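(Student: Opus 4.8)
The plan is to reduce the inequality to a comparison of eigenvalues. Every operator appearing here --- $\Delta$, $(-\Delta)^{s}$, $(-\Delta)^{-s}$, $\nabla$ and $\div=\nabla^{T}$ --- is an $N\times N$ circulant matrix, hence all are simultaneously diagonalized by the discrete Fourier transform, and the bound will drop out by reading off symbols.

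First I would unwind the $\norm{\cdot}_{-s}$-norm from \eqref{scalar_prod_discrete}. The point that makes this work is that $\div z=\nabla^{T}z$ always has zero mean: since each row of $\nabla$ sums to zero we have $\nabla\mathbf 1=0$, hence $\mathbf 1^{T}(\nabla^{T}z)=(\nabla\mathbf 1)^{T}z=0$, so $\div z$ lies in the mean-zero subspace of $X$ --- precisely the range of $\div$ --- on which $\Delta$ is invertible and $(-\Delta)^{-s},(-\Delta)^{s}$ are mutually inverse. Therefore
\[
\norm{(-\Delta)^{s}\div z}_{-s}^{2}
=\langle(-\Delta)^{-s}(-\Delta)^{s}\div z,\,(-\Delta)^{s}\div z\rangle
=\langle\div z,\,(-\Delta)^{s}\div z\rangle .
\]
Second, since $(-\Delta)^{s}$ is symmetric and positive semidefinite with largest eigenvalue $\mu_{\text{max}}^{s}$, I would estimate the right-hand side by $\mu_{\text{max}}^{s}\norm{\div z}^{2}$. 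Third, because circulant matrices commute, $\nabla\nabla^{T}=\nabla^{T}\nabla=\Delta$, so $\norm{\div z}^{2}=\langle z,\nabla\nabla^{T}z\rangle=\langle z,\Delta z\rangle\le\mu_{\text{max}}\norm{z}^{2}$. Chaining the three bounds gives $\norm{(-\Delta)^{s}\div z}_{-s}^{2}\le\mu_{\text{max}}^{s+1}\norm{z}^{2}$, which is the assertion with $C=\mu_{\text{max}}^{s+1}$.

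There is no serious obstacle; the only step needing care is the first one, namely that $(-\Delta)^{-s}(-\Delta)^{s}$ restricts to the identity when applied to $\div z$, which is why one must record that $\div$ lands in the mean-zero subspace where the fractional powers of $\Delta$ are bona fide, mutually inverse functions of $\Delta$. Equivalently the whole computation may be done frequency by frequency: writing $g_{j}$ for the Fourier symbol of $\nabla$ (so $|g_{j}|^{2}=\mu_{j}$ and $g_{0}=0$), one has $|\widehat{\div z}_{j}|^{2}=\mu_{j}|\hat z_{j}|^{2}$ and $|\widehat{(-\Delta)^{s}\div z}_{j}|^{2}=\mu_{j}^{2s+1}|\hat z_{j}|^{2}$, hence
\[
\norm{(-\Delta)^{s}\div z}_{-s}^{2}
=\frac1N\sum_{j}\mu_{j}^{-s}\mu_{j}^{2s+1}|\hat z_{j}|^{2}
=\frac1N\sum_{j}\mu_{j}^{s+1}|\hat z_{j}|^{2}
\le\mu_{\text{max}}^{s+1}\norm{z}^{2},
\]
where the $j=0$ term causes no trouble since $g_{0}=0$.
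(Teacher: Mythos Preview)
Your proof is correct and is essentially the same argument as the paper's: both reduce to the Fourier/eigenvalue side and use $|\widehat{\div z}_{j}|^{2}=\mu_{j}|\hat z_{j}|^{2}$ together with $\max_j\mu_j^{s+1}=\mu_{\text{max}}^{s+1}$. Your first variant (cancelling $(-\Delta)^{-s}(-\Delta)^{s}$ on the mean-zero range of $\div$ and then bounding $\langle\div z,(-\Delta)^{s}\div z\rangle$ by two successive eigenvalue estimates) is just a spatial-side rephrasing of the same diagonalization; your care in noting that $\div z$ has zero mean so the fractional powers are mutually inverse there is exactly the point that keeps the computation honest at $j=0$.
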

\begin{proof}
From definitions (\ref{def_fraclap_discrete}), (\ref{scalar_prod_discrete}) and Parseval's theorem, we have
\begin{equation}
\label{estim1}
\norm{(-\Delta)^s \div z}^2_{-s} = \frac{1}{N}\langle M^s\, \widehat{\div z},\widehat{\div z}\rangle\,.
\end{equation}
Since,
$$|\widehat{\div z}|^2
= M |\widehat{z}|^2\,,$$
then using this fact in (\ref{estim1}) and applying the Plancherel identity, we get
\begin{equation}
\begin{split}
\norm{\Dsav \div z}^2_{-s} 
& 
\leq \frac{1}{N} \norm{M^s} 
\|\widehat{\div z}\|^2\\
& \leq \max_{j} \,\mu_j^{s+1} \, \frac{1}{N}\|\widehat{z}\|^2
=  \max_{j} \,\mu_j^{s+1} \, \norm{z}^2\,.
\end{split}
\nonumber
\end{equation}
\end{proof}

We denote by $Z:=\{ z\in X \ : \ \norm{z}_\infty\leq 1 \}$, where $\norm{z}_\infty : = \max_j |z|$. 
For $f\in X$ we define functionals $F$ and $G$ on $X$ by
\begin{equation}
\label{defF2}
 F(z):=\left\{\begin{array}{ll} 
\dfrac{1}{2\tau}\norm{\tau (-\Delta)^s \div z + f}_{-s}^2 & \text{ if } z\in Z\,,\\[0.2cm]
+\infty & \text{ otherwise}\,.
 \end{array}\right.
\end{equation}
and 
\begin{equation}
\label{defG2}
G(z) := \left\{\begin{array}{ll} 
 0 & \text{ if } z\in Z\,,\\[0.1cm]
+\infty & \text{ otherwise}\,.
\end{array}\right.
\end{equation}
Then the discrete version of the problem (\ref{min_problem_z_cont}) is 
$$\min_{z\in X} \left( F(z) + G(z) \right)$$
and can be solved by the iterative scheme
\begin{equation}
\label{scheme_for_z_discrete}
\left\{\begin{array}{l} 
w^k \in \partial_X F(z^k)\,, \\[0.2cm]
z^{k+1} = (I+\lambda \partial_X G)^{-1}(z^k - \lambda w^k)\,.
\end{array}\right.
\end{equation}

\begin{proposition}
\label{prop_converg_zk}
Assume that $C \lambda\tau <2$, where the constant $C>0$ is as in Lemma \ref{lem_opnorm}, then $z^k\rightharpoonup z^\ast$  in $X$, where $z^\ast\in Z$ is such that $0\in \partial_X F(z^\ast) + \partial_X G(z^\ast)$.
\end{proposition}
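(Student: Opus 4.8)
The plan is to mimic the proof of Proposition~\ref{prop_converg_vk}, since the discrete scheme (\ref{scheme_for_z_discrete}) is again a forward-backward (Lions--Mercier) splitting, now applied to $F+G$ in the finite-dimensional inner product space $X$. First I would record that $\partial_X G$ is the subdifferential of the indicator of the convex closed set $Z=\{\|z\|_\infty\le1\}$, so its resolvent $(I+\lambda\partial_X G)^{-1}$ is the $\|\cdot\|$-orthogonal projection $P_Z$ onto $Z$, hence non-expansive; and that $F$ is a convex quadratic, so $\partial_X F(z)=\{\nabla F(z)\}$ is single-valued and, by Lemma~\ref{lem_opnorm}, its ``curvature'' is controlled: for any $z,z'$ one has $(\nabla F(z)-\nabla F(z'),z-z')\ge 0$ and $\|\nabla F(z)-\nabla F(z')\|\le C\tau\|z-z'\|$ (the Lipschitz constant of $\nabla F$ is exactly $C\tau$ with $C=\mu_{\max}^{s+1}$, because $\nabla F(z)=-\nabla(\tau(-\Delta)^s\div z+f)$ and $\|{-}\nabla(-\Delta)^s\div\,z\|^2=\|(-\Delta)^s\div z\|_{-s}^2\le C\|z\|^2$). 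This is where the hypothesis $C\lambda\tau<2$ enters, exactly as $\lambda\tau<2$ did in Proposition~\ref{prop_converg_vk}.

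Next I would run the monotone-decrease argument on $\{F(z^k)\}$. Writing $z^{k+1}=P_Z(z^k-\lambda w^k)$ with $w^k=\nabla F(z^k)$, the projection inequality gives $(z^k-\lambda w^k-z^{k+1},\,w-z^{k+1})\le0$ for all $w\in Z$; taking $w=z^k$ yields $(w^k,z^k-z^{k+1})\ge\frac1\lambda\|z^k-z^{k+1}\|^2$. Combining this with the quadratic expansion
\begin{equation}
F(z^{k+1})-F(z^k)=(\nabla F(z^k),z^{k+1}-z^k)+\tfrac12 B(z^{k+1}-z^k),
\nonumber
\end{equation}
where $B(\zeta):=\|(-\Delta)^s\div\zeta\|_{-s}^2\le C\|\zeta\|^2$ is the quadratic form of $F$, gives $F(z^{k+1})-F(z^k)\le(\tfrac{C}{2}-\tfrac1\lambda)\|z^k-z^{k+1}\|^2$. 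Hence $\{F(z^k)\}$ is non-increasing, bounded below by $0$, therefore convergent; and since $C\lambda\tau<2$ forces $\frac1\lambda-\frac{C}{2}>0$ (using $\tau<2/(C\lambda)\le$ the relevant bound — more precisely $C\lambda/2<1/\lambda$ would need $\lambda^2 C<2$; here one uses $C\lambda\tau<2$ together with, say, $\tau\le 1$, or one simply reuses the exact inequality $J(v^k)-J(v^{k+1})\ge(\frac1\tau-\frac\tau2)\|\cdot\|^2$ transported through $v^k=-\tau(-\Delta)^s\div z^k$), we get $\|z^{k+1}-z^k\|\to0$ on the relevant component. Boundedness of $\{F(z^k)\}$ plus the definition of $F$ (the $\frac1{2\tau}\|\cdot\|_{-s}^2$ term) bounds $\{(-\Delta)^s\div z^k\}$, hence $\{w^k\}=\{\nabla F(z^k)\}$, and confinement of $z^k$ to $Z$ bounds $\{z^k\}$ outright in $X$; in finite dimensions a convergent subsequence $z^{k_j}\to z^\ast\in Z$ exists.

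Finally I would identify the limit. Using monotonicity of $\partial_X G$ (indicator subdifferential) together with $z^{k+1}=(I+\lambda\partial_X G)^{-1}(z^k-\lambda w^k)$, rewrite, exactly as in (\ref{ineq_star}),
\begin{equation}
\tfrac1\lambda(z^k-z^{k+1},w-z^{k+1})+(w^k+\partial_X G(w),\,z^{k+1}-w)\le0 \quad\text{for all }w\in Z;
\nonumber
\end{equation}
pass to the limit along $k_j$ — the first term vanishes since $\|z^{k}-z^{k+1}\|\to0$ and the iterates are bounded, while $w^{k_j}=\nabla F(z^{k_j})\to\nabla F(z^\ast)$ by continuity — to obtain $(\nabla F(z^\ast)+\partial_X G(w),z^\ast-w)\le0$ for all $w\in Z$. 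Maximal monotonicity of $\partial_X G$ then gives $-\nabla F(z^\ast)\in\partial_X G(z^\ast)$, i.e. $0\in\partial_X F(z^\ast)+\partial_X G(z^\ast)$, the desired optimality condition. To upgrade subsequential convergence to convergence of the whole sequence, I would note that $\|z^{k+1}-z^\ast\|^2=\|P_Z(z^k-\lambda w^k)-P_Z(z^\ast-\lambda w^\ast)\|^2\le\|(z^k-z^\ast)-\lambda(w^k-w^\ast)\|^2\le\|z^k-z^\ast\|^2$ once $C\lambda\tau<2$ makes the forward step a contraction in the relevant seminorm (the cross term $-2\lambda(w^k-w^\ast,z^k-z^\ast)\le0$ by monotonicity of $\nabla F$, and $\lambda^2\|w^k-w^\ast\|^2\le\lambda^2 C\tau\cdot(\text{form})$ is absorbed); Fejér monotonicity of $\{\|z^k-z^\ast\|\}$ plus the existence of a subsequential limit equal to $z^\ast$ forces $z^k\to z^\ast$ (strongly, hence weakly). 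The main obstacle is the bookkeeping of the two different norms — $\|\cdot\|$ on the $z$-side versus $\|\cdot\|_{-s}$ hidden inside $F$ — and making the constant $C$ from Lemma~\ref{lem_opnorm} play precisely the role that the factor $\tau$ played in Proposition~\ref{prop_converg_vk}; the weak/strong distinction is immaterial in $X=\mathbb{R}^N$ but is kept for parallelism with the infinite-dimensional statements.
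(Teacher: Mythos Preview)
Your approach is essentially the paper's: expand $F(z^{k+1})-F(z^k)$, use the projection inequality from the resolvent of $\partial_X G$ with $w=z^k$, invoke Lemma~\ref{lem_opnorm} to bound the quadratic remainder, and then follow Proposition~\ref{prop_converg_vk}. The identification of the limit via maximal monotonicity of $\partial_X G$ is also the same.

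Your confusion about why the hypothesis is $C\lambda\tau<2$ rather than $C\lambda^2<2$ comes from a dropped $\tau$: the quadratic part of $F(z)=\tfrac{1}{2\tau}\|\tau(-\Delta)^s\div z+f\|_{-s}^2$ is $\tfrac{\tau}{2}\|(-\Delta)^s\div z\|_{-s}^2$, so the correct second-order term is $\tfrac{\tau}{2}\|(-\Delta)^s\div(z^{k+1}-z^k)\|_{-s}^2\le\tfrac{\tau C}{2}\|z^{k+1}-z^k\|^2$, giving
\[
F(z^{k+1})-F(z^k)\le\Bigl(\tfrac{\tau C}{2}-\tfrac{1}{\lambda}\Bigr)\|z^{k+1}-z^k\|^2,
\]
and now $C\lambda\tau<2$ is exactly $\tfrac{\tau C}{2}<\tfrac{1}{\lambda}$. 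No auxiliary assumption like $\tau\le1$ is needed. (Relatedly, the identity $\|{-}\nabla(-\Delta)^s\div z\|^2=\|(-\Delta)^s\div z\|_{-s}^2$ you wrote is not correct; what holds is $\langle Az,z\rangle=\|(-\Delta)^s\div z\|_{-s}^2$ for the symmetric operator $A=-\nabla(-\Delta)^s\div$, whence $\|A\|\le C$ and the Lipschitz constant of $\nabla F$ is $\tau C$.) With this fix your Fej\'er-monotonicity upgrade to full-sequence convergence also goes through via co-coercivity, though the paper simply defers that part to the argument of Proposition~\ref{prop_converg_vk}.
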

\begin{proof}
In the proof,  we essentially follow steps in the proof of Proposition \ref{prop_converg_vk}.
We assume that $z^{k}$, $z^{k+1} \in Z$. After simple calculations, we obtain
\begin{equation}
\label{proof_calc1}
\begin{split}
&F(z^{k+1})-F(z^k)\\
&= \frac{1}{2\tau}\norm{\tau (-\Delta)^s \div z^{k+1}+f}_{-s}^2
-\frac{1}{2\tau}\norm{\tau (-\Delta)^s \div z^{k}+f}_{-s}^2\\
&=\frac{\tau}{2}\norm{(-\Delta)^s\div (z^{k+1}-z^{k})}_{-s}^2
 + \langle f+\tau (-\Delta)^s\div z^{k}, \div (z^{k+1}-z^{k})\rangle
\end{split}
\end{equation}

From the second line of the scheme (\ref{scheme_for_z_discrete}) we have that
$$\frac{1}{\lambda} (z^k-z^{k+1})-w^k \in \partial_X G(z^{k+1})$$
Using the definition of subdifferential $\partial_X G$ and the fact that $G$ is an indicator function of the set $Z$, we get
$$ \frac{1}{\lambda}  \langle z^k-z^{k+1}-\lambda w^k , p- z^{k+1} \rangle\geq 0$$
for any $p\in Z$. Then in particular for $p = z^k$ and using that 
$w^{k} = -\nabla(f+\tau(-\Delta)^s\div z^{k})$,
we obtain
\begin{equation*}
-\langle \tau(-\Delta)^s \div z^k + f,\div(z^{k}-z^{k+1})\rangle\leq -\frac{1}{\lambda}\norm{z^k-z^{k+1}}^2\,.
\nonumber
\end{equation*}
Taking into account this fact and Lemma \ref{lem_opnorm} in (\ref{proof_calc1}), we obtain
\begin{equation}
\begin{split}
F(z^{k+1})-F(z^k)
&\leq \dfrac{\tau}{2}  \norm{(-\Delta)^s\div (z^{k+1}-z^k)}_{-s}^2-\dfrac{1}{\lambda}\norm{z^k-z^{k+1}}^2\\
&\leq \left(\dfrac{\tau}{2} C -\dfrac{1}{\lambda}\right) \norm{z^k-z^{k+1}}^2\,.
\end{split}
\end{equation}
Therefore, taking $\lambda$ so that $C\lambda\tau<2$, we get 
$F(z^{k+1})<F(z^k)$, as long as $z^{k+1}\neq z^k$, which implies that the sequence $\{F(z^k)\}$ is convergent.
To complete the proof we proceed in a similar way as in the proof of Proposition~\ref{prop_converg_vk} using the fact that in a finite dimensional space linear operators are bounded and norms are equivalent.
\end{proof}

We note that from Proposition \ref{prop_converg_zk} and Lemma \ref{lem_opnorm} it follows that the sequence $\{z^k\}$ converges weakly to $z^\ast$ in $X $ if and only if $$C\tau\lambda<2\,,$$ 
where the constant  $C = \mu_{\textrm{max}}^{s+1}$ and $\mu_{\textrm{max}}:=\max_j \mu_j$ is the largest eigenvalue of the discrete Laplace operator $\Delta$. In the case of considered here finite difference discretization, we have that 
$$
\mu_{\textrm{max}} = \frac{1}{h^2}\left\{
\begin{array}{cc}
4 & \text{ if $N$ is even}\,, \\[0.2cm]
4 \sin((N-1)\pi/2N) & \text{ if $N$ is odd}\,,
\end{array} 
\right.
$$
where $h$ denotes the space discretization step. 
In other words, to achieve the convergence one has to select values of parameters $\tau$, $\lambda$ and $h$ so that $\mu_{\text{max}}^{s+1}\tau\lambda<2$. Taking, for instance, $\tau = O(h^{2s+3})$, $\lambda = O(1/h)$ and passing to the limit with $h\rightarrow 0$, we get the desired result. 

\section{Numerical results}

In this section, we present results of numerical experiments, obtained by application of the introduced earlier schemes, to solve the evolution equation~(\ref{eq_rigorous}). 
These experiments have been performed on one-dimensional data for more accurate presentation of differences in solutions with respect to different values of the index $s$ and their characteristic features. 

For experiments, we were considering initial data $f$, $g:[-10,10]\rightarrow \mathbb{R}$, given by explicit formulas
\[ f(x) = \left\{
  \begin{array}{c l}
     20& \quad \text{if } |x|\leq 2 \\[0.2cm]
    50\,|x|^{-1}-5 & \quad \text{otherwise}
    \end{array} \right.,\qquad  
  g(x) = \left\{
  \begin{array}{c l}
    20& \quad \text{if } |x|\leq 2 \\[0.2cm]
    0 & \quad \text{otherwise } 
  \end{array} \,.\right.\]

\begin{figure}[h!]
\centering
\includegraphics[scale=0.3]{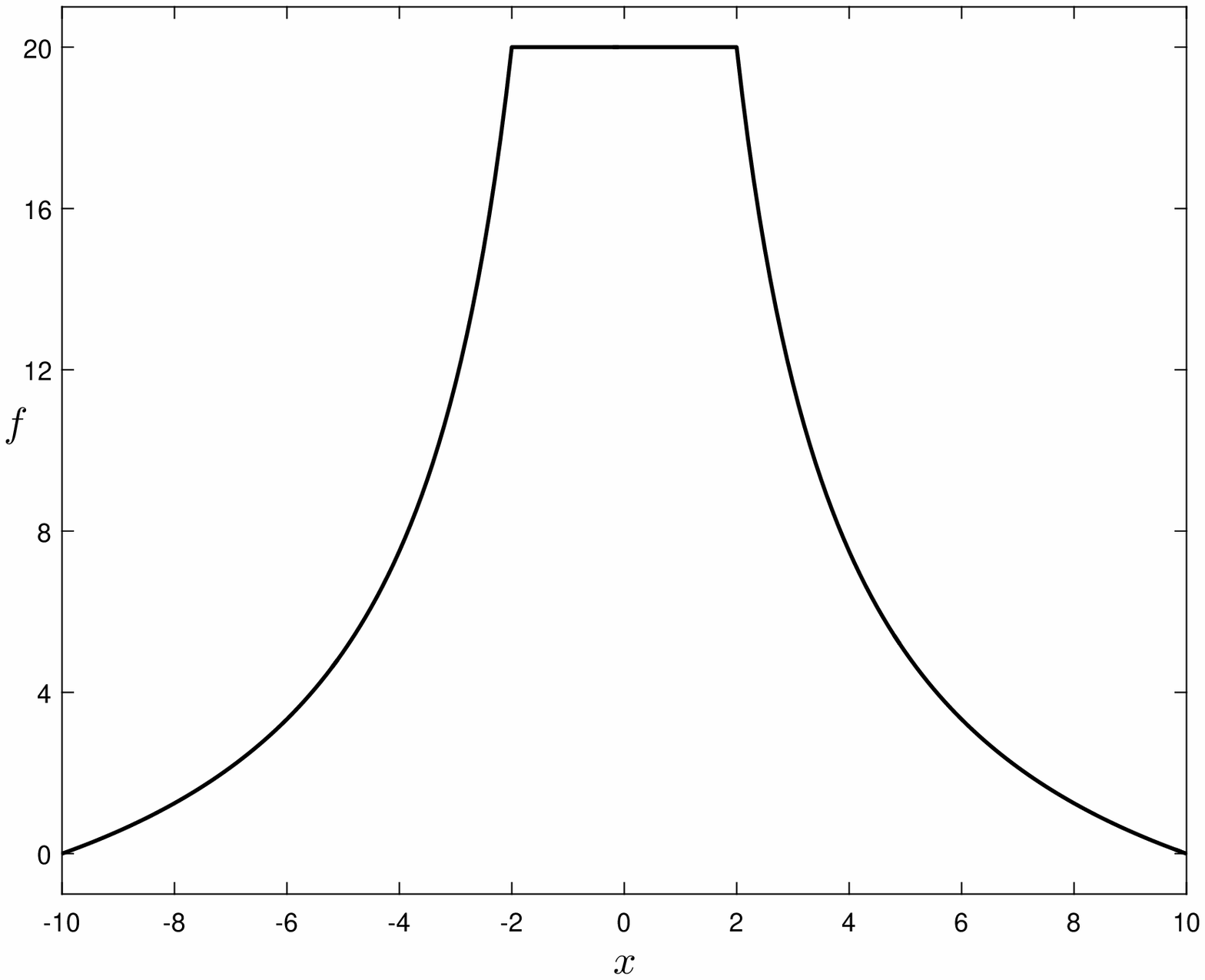}\hspace{0.2cm}
\includegraphics[scale=0.3]{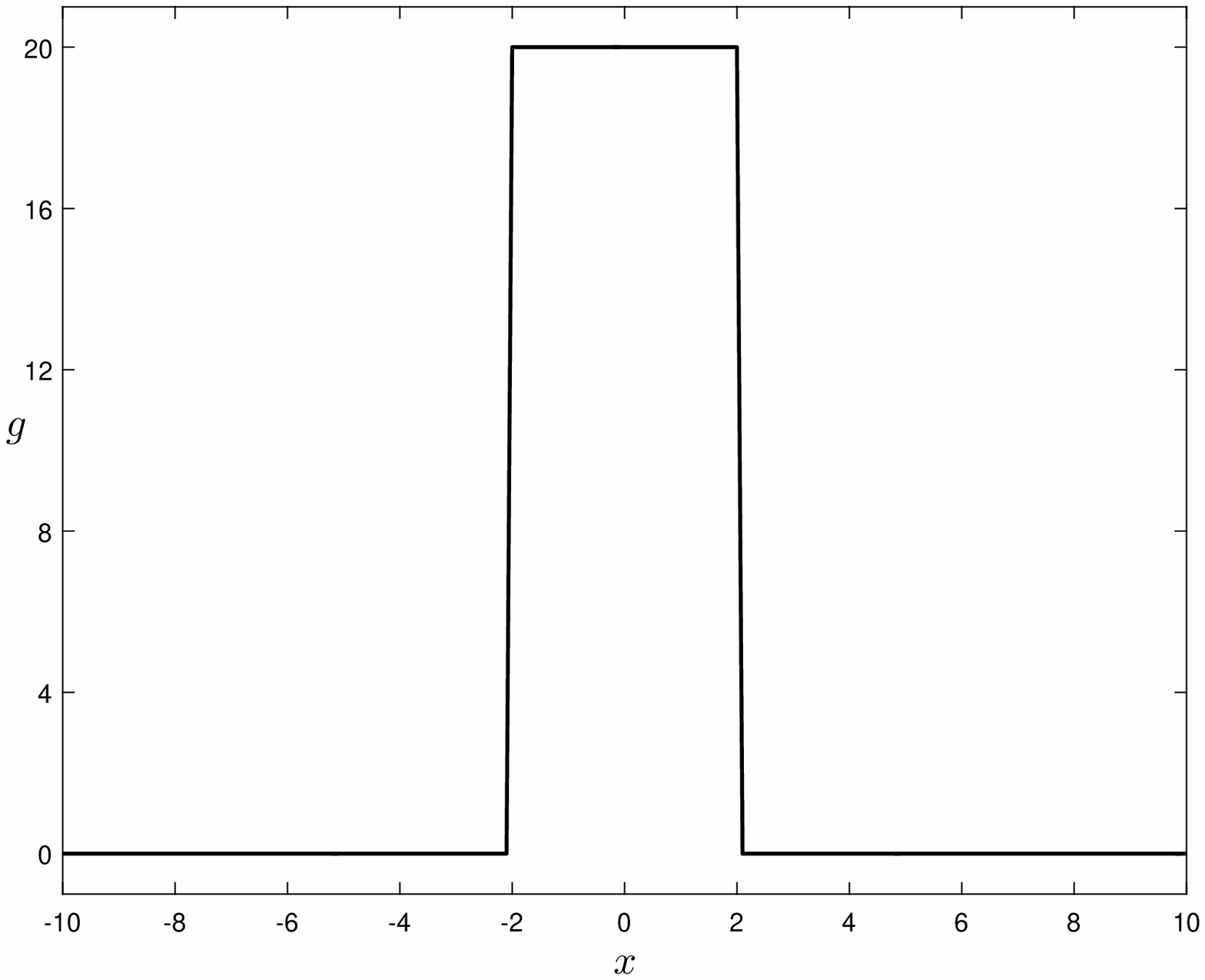}
\caption{Graphs of functions $f$ and $g$ considered in experiments as initial data.}
\label{fig1}
\end{figure}

In all calculations, we have fixed values of  parameters $h = 0.1$ and $\tau = 0.1$. In each case, a value of $\lambda$ was selected so that 
the criterion for the convergence given in Proposition \ref{prop_converg_zk} would be satisfied, i.e., we set 
$\lambda = 4\cdot 10^{-2}$,
$\lambda = 2\cdot 10^{-3}$ and
$\lambda =  10^{-4}$ for 
$s = 0$, $s=0.5$ and $s=1$, respectively. In Figure \ref{fig2}, we present evolution of solutions to the $H^{-s}$ total variation flow for initial data $f$ and $g$, and for different values of the index $s$ ($s=0, 0.5, 1$).

\begin{figure}[h!]
\centering
\includegraphics[scale=0.3]{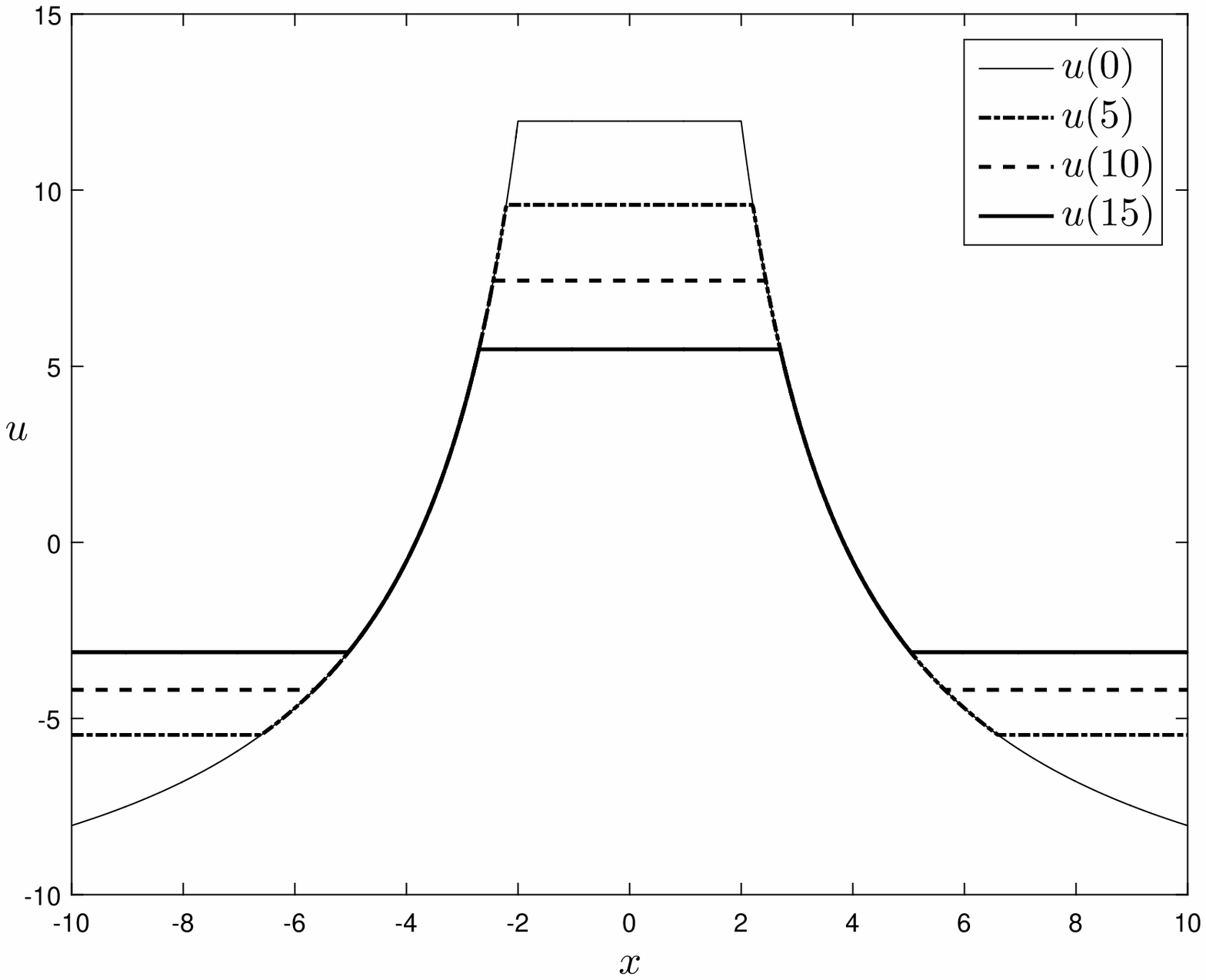}\hspace{0.2cm}
\includegraphics[scale=0.3]{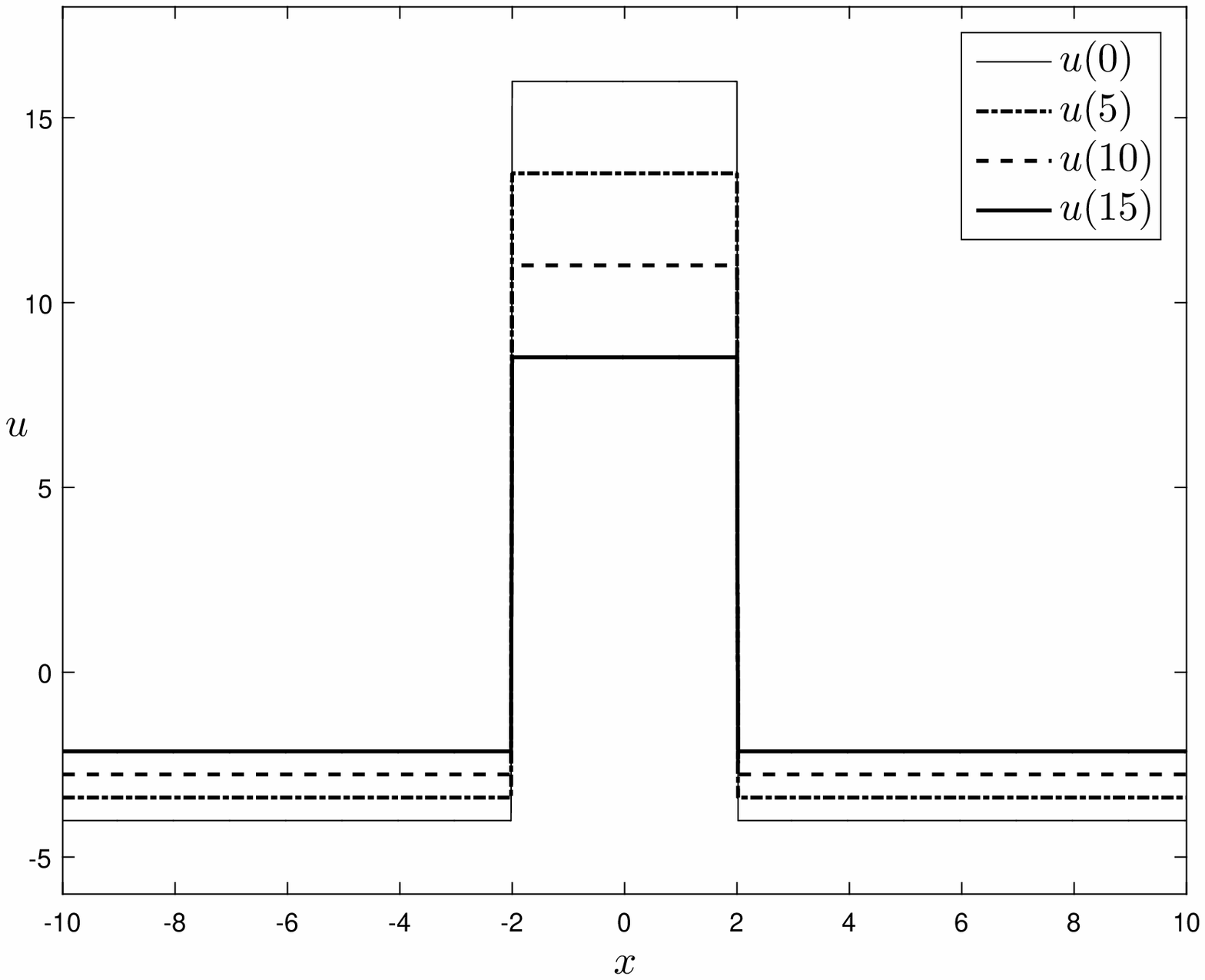}\\[-0.2cm]
$s = 0$\\[0.5cm]
\includegraphics[scale=0.3]{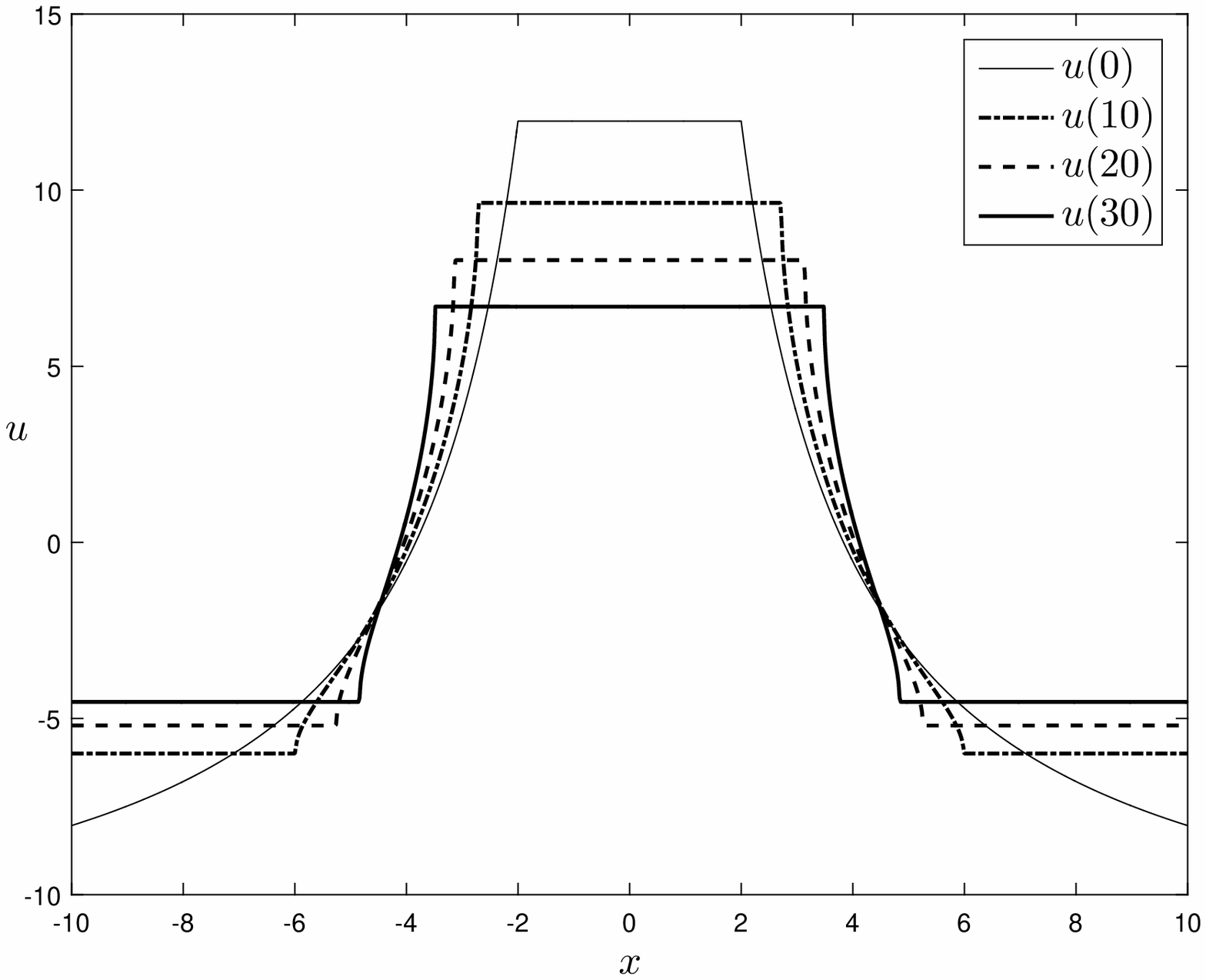}
\hspace{0.2cm}
\includegraphics[scale=0.3]{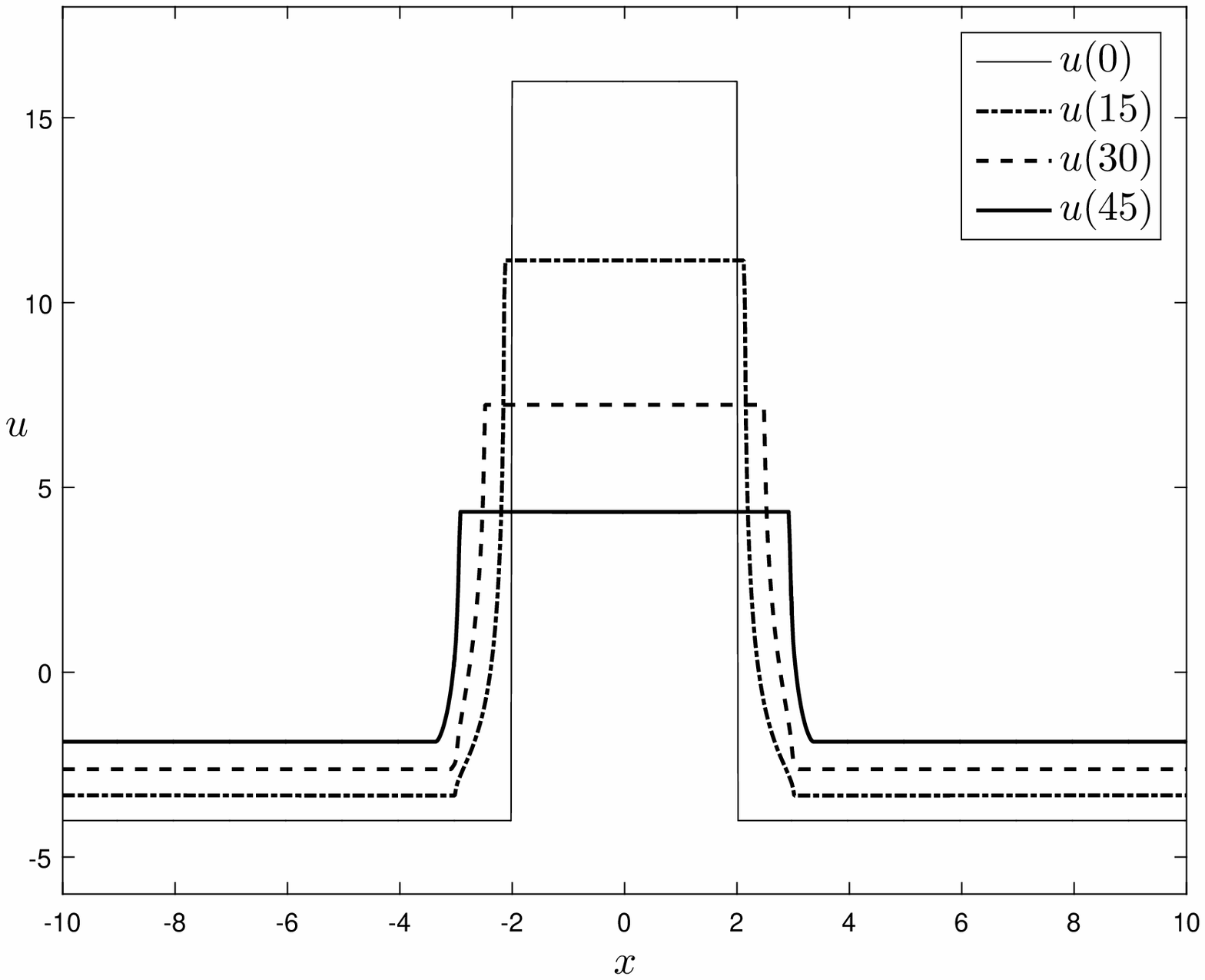}\\[-0.2cm]
$s = 0.5$\\[0.5cm]
\includegraphics[scale=0.3]{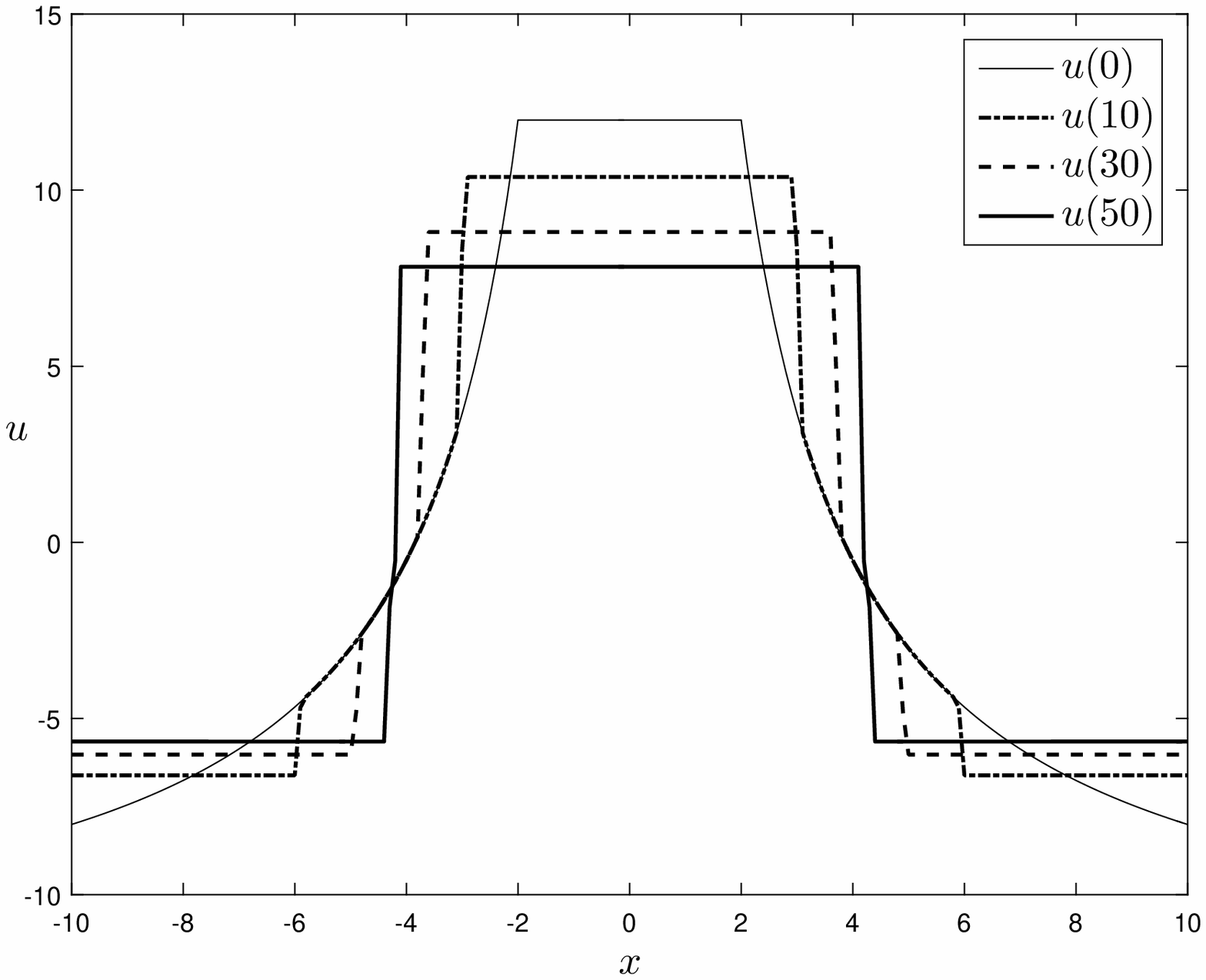}\hspace{0.2cm}
\includegraphics[scale=0.3]{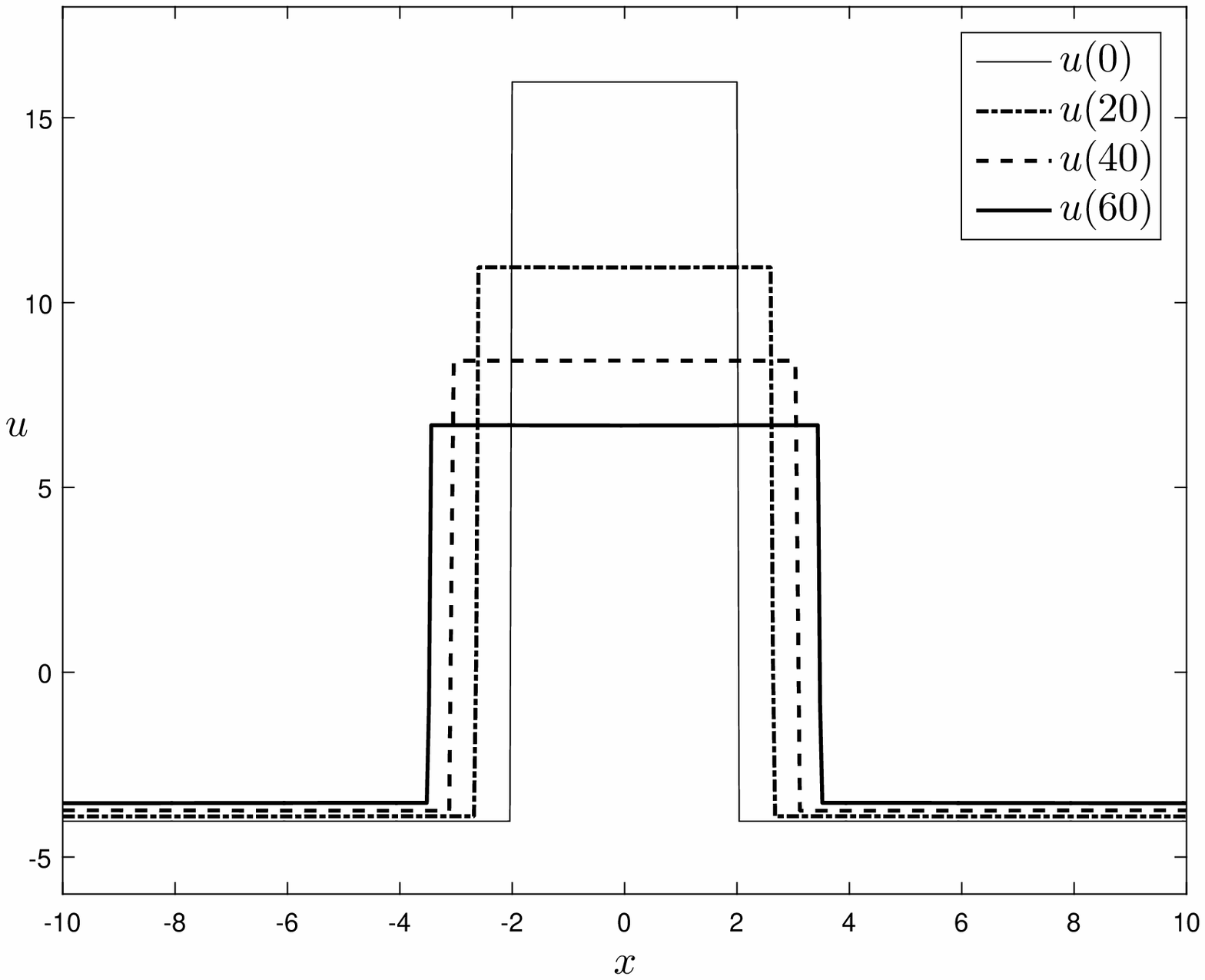}\\[-0.2cm]
$s = 1$
\caption{Evolution of solutions to the $H^{-s}$ total variation flow for initial data $f$ and $g$, and for different values of the index $s$.}
\label{fig2}
\end{figure}

From our computation, we conjecture that a solution may be instantaneously discontinuous for $s \in (0,1]$ for Lipschitz initial data. This is rigorously proved for $s=1$ in \cite{GigGig10}. Also, we see from this computation, that the motion becomes slower as $s$ becomes larger.



\section*{Acknowledgement}
This work was partially supported by the EU IRSES program ``FLUX'' and the Polish Ministry of the Science and Higher Education  grant number 2853/7.PR/2013/2.
The work of the first author was partially supported by Japan Society for the Promotion of Science through grants No. 26220702 (Kiban S) and No. 16H03948 (Kiban B). A part of the research for this paper was performed, when the second and the third authors visited the University of Tokyo. Its hospitality is gratefully acknowledged.

\end{document}